\newtheorem{theo}{Theorem}[section]
\newtheorem{lemm}[theo]{Lemma}
\numberwithin{equation}{section}
\theoremstyle{definition}
\theoremstyle{remark}
\newtheorem{rem}[theo]{Remark}
\newcommand{\Ker}[0]{\operatorname{Ker}}
\newcommand{\Hom}[0]{\operatorname{Hom}}
\newcommand{\codim}[0]{\operatorname{codim}}
\newcommand{\Aut}[0]{\operatorname{Aut}}
\newcommand{\pr}[0]{\operatorname{pr}}
\newcommand{\deldel}{\sqrt{-1}\partial \overline{\partial}}
\newcommand{\simm}{\hspace{-0.1cm}\sim}
\newcommand{\Rur}[3]{R_{#1}(#2, \bar{#2}, #3, \bar{#3})}
\begin{document}

\date{\today, version 0.01}

\title[On semi-positive holomorphic sectional curvature]
{On projective manifolds with \\semi-positive holomorphic sectional curvature}

\author{Shin-ichi MATSUMURA}
\email{{\tt mshinichi-math@tohoku.ac.jp, mshinichi0@gmail.com}}
\address{Mathematical Institute, Tohoku University, 
6-3, Aramaki Aza-Aoba, Aoba-ku, Sendai 980-8578, Japan.}

\renewcommand{\subjclassname}{%
\textup{2010} Mathematics Subject Classification}
\subjclass[2010]{Primary 53C25, Secondary 32Q10, 14M22.}

\keywords
{Holomorphic sectional curvature, 
Bisectional curvature, 
Scalar curvature, 
Holomorphic foliations, 
Structure theorems, 
Uniformization theorems, 
Maximal rationally connected fibrations, 
Albanese maps, 
Rationally connectedness, 
Abelian varieties.}

\maketitle

\begin{abstract}
In this paper,     
we establish a structure theorem for a smooth projective variety $X$ 
with semi-positive holomorphic sectional curvature. 
Our structure theorem contains the solution for Yau's conjecture and it can be regarded as  
a natural generalization of the structure theorem proved by Howard-Smyth-Wu and Mok 
for holomorphic bisectional curvature.    
Specifically, we prove that $X$ admits a locally trivial morphism $\phi:X \to Y$ 
such that the fiber $F$ is rationally connected and 
the image $Y$ has a finite \'etale cover $A \to Y$ by an abelian variety $A$, 
by combining the author's previous work with the theory of holomorphic foliations. 
Moreover, we show that the universal cover of $X$ is biholomorphic and isometric  
to the product $\mathbb{C}^m \times F$ of the universal cover $\mathbb{C}^m$ of $Y$ with a flat metric 
and the rationally connected fiber $F$ 
with a K\"ahler metric whose holomorphic sectional curvature is semi-positive. 
\end{abstract}          
 

\section{Introduction}\label{sec1}

After Siu-Yau and Mori had solved the Frankel conjecture in \cite{SY80} and \cite{Mor79}, 
Howard-Smyth-Wu and Mok  established the structure theorem 
for a compact K\"ahler manifold $M$ with semi-positive holomorphic bisectional curvature 
in \cite{HSW81} and \cite{Mok88} (see also \cite{CG71} and \cite{CG72}). 
Their structure theorem states that $M$ admits a locally trivial morphism $f: M\to B$ 
such that the fiber $F$ is a Hermitian symmetric manifold and 
the image $B$ has a finite \'etale cover $T \to B$ by a complex torus $T$. 
Moreover,  it says that 
the universal cover of $M$ is biholomorphic and isometric to the product $\mathbb{C}^{m} \times M'$.

This paper is devoted to studies of (semi-)positivity of holomorphic sectional curvature. 
Positivity of holomorphic sectional curvature is 
much weaker than that of holomorphic bisectional curvature. 
In fact, by Yang's affirmative answer for Yau's conjecture posed in \cite{Yau82}, 
we know that a compact K\"ahler manifold $X$ with positive holomorphic sectional curvature 
is rationally connected (see \cite{Yan18a} and see \cite{HW15}, \cite{Mat18b} for another approach). 
However, the manifold $X$ can not be expected to be a Hermitian symmetric manifold 
and even to have the nef anti-canonical bundle (see \cite[Example 3.6]{Yan16}). 
Nevertheless, holomorphic bisectional curvature (more generally, the curvature tensor) 
is actually determined by holomorphic sectional curvature. 
Therefore it is an interesting and natural problem to 
investigate a relation or an analogy between holomorphic sectional curvature and  bisectional curvature.
Particularly, it is one of the most significant problems 
to generalize the structure theorem of Howard-Smyth-Wu and Mok to semi-positive holomorphic sectional curvature.

The main result of this paper is the following structure theorem 
for smooth projective varieties with semi-positive holomorphic sectional curvature. 
Our theorem affirmatively solves \cite[Conjecture 1.3]{Mat18b} in a strong form. 
Moreover, the theorem naturally contains the solution for Yau's conjecture 
and it provides a generalization of 
the structure theorem of Howard-Smyth-Wu and Mok in the case of projective varieties.

\begin{theo}\label{m-thm}
Let $X$ be a smooth projective variety 
equipped with a K\"ahler metric $g$ with semi-positive holomorphic sectional curvature. 
Then we have the following statements\,$:$
\vspace{0.2cm}\\
$(1)$ 
There exists a smooth morphism $\phi: X \to Y$ 
to a smooth projective variety $Y$ with the following properties\,$:$
\begin{itemize}
\item The morphism $\phi: X \to Y$ is a locally trivial morphism 
$($that is, all the fibers are isomorphic$)$. 
\item The image $Y$ is a smooth projective variety with a flat metric. 
In particular, there exists a finite \'etale cover $A \to Y$ by an abelian variety $A$. 
\item The fiber $F$ is a rationally connected manifold. 
In particular, the morphism $\phi: X \to Y$ is a MRC $($maximal rationally connected$)$ fibration of $X$. 
\end{itemize}
Therefore the fiber product $X^*:=A \times_Y X$ admits 
the locally trivial Albanese map $X^* \to A$ to the abelian variety $A$ 
with the rationally connected fiber $F$. 
\vspace{0.2cm}\\
$(2)$ 
Moreover we obtain the isomorphism 
$$
X_{\rm{univ}} \cong \mathbb{C}^m \times F, 
$$
where $X_{\rm{univ}}$ is the universal cover of $X$ and 
$F$ is the rationally connected fiber of $\phi$. 
We have the following commutative diagram\,$:$
\begin{equation*}
\xymatrix@C=40pt@R=30pt{
 X_{\rm univ} \cong \mathbb{C}^{m} \times F\ar[r]^{} \ar[d]^{} & X^*:=A \times_Y X\ar[d] \ar[r]^{} & X\ar[d]^{\phi} \\ 
\mathbb{C}^{m} \ar[r]  & A \ar[r]^{}  &  Y.\\   
}
\vspace{0.1cm}
\end{equation*}
In particular, the fundamental group of $X$ is an extension of a finite group by $\mathbb{Z}^{2m}$. 
\vspace{0.2cm}\\
$(3)$ There exist a K\"ahler metric $g_F$ on the fiber $F$ and 
a K\"ahler metric $g_Y$ on $Y$ with the following properties\,$:$

\begin{itemize}
\item The holomorphic sectional curvature of $g_F$ is semi-positive.  
\item The K\"ahler metric $g_Y$ is flat. 
\item The above isomorphism $X_{\rm{univ}} \cong \mathbb{C}^m \times F$ is not only biholomorphic but also isometric 
with respect to the K\"ahler metrics $\mu^*g $, $\pi^*g_{Y}$, and $g_F$. 
\end{itemize}
Here $\pi$ and $\mu$ respectively denote the universal cover $\pi: \mathbb{C}^m \to Y$ of $Y$ 
and the universal cover $\mu: X_{\rm univ} \to X$ of $X$. 
\end{theo}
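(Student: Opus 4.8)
The plan is to prove this structure theorem by combining three ingredients: the MRC fibration, positivity properties of the holomorphic sectional curvature under the fibration, and a splitting/foliation argument on the universal cover.

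First I would set up the maximal rationally connected (MRC) fibration.The plan is to reduce the global structure to the maximal rationally connected (MRC) fibration and then promote it, step by step, from a rational map to a locally trivial morphism with flat base, before extracting the isometric splitting on the universal cover by a de Rham--type argument. First I would invoke the general existence of the MRC fibration: after a modification there is an almost holomorphic map $\phi: X \dashrightarrow Y$ whose general fiber is rationally connected and whose base $Y$ is not uniruled. The semi-positivity of the holomorphic sectional curvature should then be used to rule out degenerate fibers and show that $\phi$ is in fact a smooth morphism; the mechanism is that semi-positive holomorphic sectional curvature forces a suitable nefness of the relative tangent sheaf $T_{X/Y}$ along the fibers, which prevents jumping. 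This is the point where I expect to lean on the author's earlier work, namely the relative form of the solution to Yau's conjecture identifying the rationally connected directions with the positive directions of the curvature.

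Next I would isolate the tangent directions along which the holomorphic sectional curvature vanishes. These ``flat directions'' should assemble into a holomorphic foliation $\mathcal{F}$ on $X$, complementary to $T_{X/Y}$ and hence isomorphic to $\phi^* T_Y$, and I would use the theory of holomorphic foliations to show that $\mathcal{F}$ is integrable and defines a flat holomorphic connection on $\phi$ whose horizontal leaves project locally isomorphically to $Y$. Flatness of this connection is exactly what upgrades $\phi$ to a locally trivial morphism. Moreover, semi-positivity of the holomorphic sectional curvature together with the non-uniruledness of $Y$ should force the induced metric on $Y$ to have identically vanishing holomorphic sectional curvature, hence to be flat; by the Calabi--Yau theorem a compact K\"ahler manifold with flat metric is covered by a complex torus, and since $Y$ is projective, by an abelian variety $A$. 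Pulling back along $A \to Y$ then produces the locally trivial Albanese map and the diagram of part $(1)$.

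For parts $(2)$ and $(3)$ I would pass to the universal cover $X_{\rm univ}$. Since the fibers $F$ are rationally connected they are simply connected, and since the universal cover of $Y$ is $\mathbb{C}^m$, the locally trivial morphism $\phi$ pulls back over the contractible base to a trivial bundle; the flat connection $\mathcal{F}$ supplies an explicit holomorphic trivialization $X_{\rm univ} \cong \mathbb{C}^m \times F$, giving the biholomorphism of part $(2)$ and the extension description of $\pi_1(X)$. To upgrade this to an isometry I would show that the horizontal distribution $\mathcal{F}$ is parallel: the vanishing of the holomorphic sectional curvature in the flat directions, combined with semi-positivity elsewhere, should kill the relevant components of the second fundamental form and reduce the holonomy. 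A de Rham--type decomposition of $(X_{\rm univ}, \mu^* g)$ then yields the isometric product, with $\mathbb{C}^m$ carrying the flat metric $\pi^* g_Y$ and $F$ carrying a K\"ahler metric $g_F$ whose holomorphic sectional curvature remains semi-positive as a restriction of that of $g$.

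The hardest part will be the foliation step: establishing that the flat directions genuinely integrate to a foliation whose connection is flat, so that $\phi$ becomes locally trivial, while simultaneously controlling the second fundamental form tightly enough to make the de Rham decomposition isometric rather than merely biholomorphic. The delicacy is that one only has the pointwise inequality $H(v) \geq 0$ for the holomorphic sectional curvature, and although this quantity determines the full curvature tensor through a polarization identity, it does \emph{not} imply semi-positivity of the bisectional curvature; converting the scalar condition on $H$ into the global positivity of $T_{X/Y}$ and the flatness of $T_Y$ is therefore the technical core of the argument.
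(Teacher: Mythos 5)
Your overall architecture---MRC fibration, an integrable horizontal foliation plus Ehresmann's theorem for local triviality, a flat base, and a splitting over the universal cover---matches the paper's, and your de Rham--type route to the isometric statement in parts $(2)$--$(3)$ is a legitimate alternative to the paper's explicit computation: once both summands of $T_X = T_{X/Y} \oplus T_{X/Y}^{\bot}$ are known to be parallel (vanishing of the second fundamental form, together with the fact that for a K\"ahler metric the Chern connection is the Levi-Civita connection), de Rham's theorem gives the isometric product directly. However, there is a genuine gap at what the paper itself identifies as the main difficulty: passing from the MRC fibration, which is a priori only an almost holomorphic \emph{rational} map $\phi: X \dashrightarrow Y$, to an honest morphism. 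Your proposed mechanism---that semi-positive holomorphic sectional curvature forces ``nefness of $T_{X/Y}$ along the fibers, which prevents jumping''---conflates smoothness (absence of degenerate fibers) with absence of indeterminacy points: controlling the fibers over the locus where $\phi$ is already defined does nothing to extend $\phi$ across the indeterminacy locus, and the earlier results of \cite{Mat18b} that you want to invoke are theorems about \emph{morphisms}, so they cannot be applied to $\phi$ until this issue is resolved.

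The step you are missing is the content of Theorems \ref{key-thm} and \ref{main-thm}. One first proves, by a curvature-integral argument (pseudo-effectivity of $K_Y$ via \cite{GHS03} and \cite{BDPP}, a Stokes/Fubini identity for the scalar curvature in the fiber directions, and Yang's lemma on minimizers of the holomorphic sectional curvature), that $\phi$ is smooth and that $T_X = T_{X/Y} \oplus j(\phi^{*}T_Y)$ holds holomorphically and orthogonally on the open set $X_o$ where $\phi$ is a morphism. Then---and this is the crucial move absent from your proposal---since $\codim(X \setminus X_o) \geq 2$, the two summands extend across $X \setminus X_o$ as reflexive sheaves $V$ and $W$ with $T_X \cong V \oplus W$ on all of $X$ (Lemma \ref{ref-lem}); upper semicontinuity of fiber dimensions forces $V$ and $W$ to be locally free (Lemma \ref{Har-lem}); the Frobenius tensor of $W$ vanishes on $X_o$, hence identically by torsion-freeness, so $W \subset T_X$ is an integrable foliation; and H\"oring's theorem (Lemma \ref{fol-lem}), applied to this foliation whose general leaf is compact and rationally connected, produces a smooth morphism $\phi': X \to Y'$ with $W = T_{X/Y'}$, which one then checks is again an MRC fibration. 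Note that $\phi'$ is in general a \emph{different} model of the MRC fibration with a new base $Y'$: the original rational map is never extended, it is replaced. Without this foliation-theoretic construction of the morphism, the remainder of your argument, which presupposes a morphism onto a non-uniruled compact base, cannot get started. Two minor corrections: the fact that a compact K\"ahler manifold with identically vanishing holomorphic sectional curvature is flat and finitely covered by a torus is classical (\cite{Igu54}, \cite{Ber66}, \cite{HLW16}), not the Calabi--Yau theorem; and the existence of a metric $g_Y$ on $Y$ with $\phi^{*}g_Y = g_Q$ is itself a theorem of \cite{Mat18b}, not an automatic property of an ``induced metric.''
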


The proof of Theorem \ref{m-thm} is based on 
a combination of studies of MRC fibrations developed in \cite{Mat18b} 
and the theory of (holomorphic) foliations. 
It is worth to mention that a part of \cite{Mat18b} is strongly influenced by 
the idea in \cite{HW15} and the notion of RC positivity  in \cite{Yan18a}, 
which are motivated by Yau's conjecture in \cite{Yau82} 
(see \cite{Yan18b} and \cite{Yan18c} for RC positivity). 
Theorem \ref{m-thm} can be derived from Theorem \ref{main-thm} and Theorem \ref{mainn-thm}. 

The following theorem, which is one of the key ingredients of this paper, 
is proved by the splitting theorem of the (holomorphic) tangent bundle $T_X$ of $X$ 
(see Theorem \ref{key-thm}) 
and the theory of foliations with rationally connected leaves developed in \cite{Hor07}.

\begin{theo}\label{main-thm}
Let $X$ be a smooth projective variety admitting a K\"ahler metric  with semi-positive holomorphic sectional curvature.

Then we can choose a MRC fibration $X \to Y$ of $X$ to be 
a morphism $($without indeterminacy locus$)$  to a smooth projective variety $Y$. 
\end{theo}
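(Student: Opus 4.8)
The plan is to realize the MRC fibration as the map associated with a \emph{regular} holomorphic foliation whose leaves are rationally connected, and to obtain the morphism structure (absence of indeterminacy) from H\"oring's theory of foliations with rationally connected leaves in \cite{Hor07}. A priori the MRC fibration of $X$ is only a dominant rational map to a smooth projective variety $Y_0$, defined on a dense Zariski-open subset, whose general fiber is rationally connected; the task is to replace it, within its birational class, by an everywhere-defined morphism onto a smooth base. The bridge between the transcendental input (the metric $g$) and this algebro-geometric goal is the splitting of $T_X$.

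First I would invoke the splitting theorem (Theorem \ref{key-thm}): the semi-positivity of the holomorphic sectional curvature of $g$ yields a decomposition $T_X = \mathcal{F} \oplus \mathcal{F}'$ into holomorphic subbundles, in which $\mathcal{F}$ is the ``positively curved'' part and $\mathcal{F}'$ the flat part. I would then verify that $\mathcal{F}$ is involutive, so that it defines a \emph{regular} holomorphic foliation on all of $X$ (a genuine subbundle, with no singular locus), and that on the open locus where the MRC fibration is a submersion the relative tangent sheaf of that fibration coincides with $\mathcal{F}$. This identification is precisely what ties the curvature-theoretic foliation to the birational-geometric MRC fibration.

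Next I would show that $\mathcal{F}$ is algebraically integrable with rationally connected leaves. The positivity packaged into $\mathcal{F}$ should force enough free rational curves tangent to $\mathcal{F}$ through a general point, so that the leaf closures are rationally connected; this is exactly the hypothesis under which the machinery of \cite{Hor07} applies. Because $\mathcal{F}$ is an honest subbundle rather than merely a saturated subsheaf with a nonempty singular set, \cite{Hor07} produces a genuine smooth morphism $\phi : X \to Y$ onto a smooth projective variety $Y$, \emph{without indeterminacy locus}, whose fibers are the rationally connected leaves of $\mathcal{F}$.

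Finally, since the general fiber of $\phi$ is rationally connected and its tangent space is $\mathcal{F}$, which agrees with the relative tangent sheaf of the MRC map on the common open locus, the morphism $\phi$ is itself an MRC fibration of $X$, which is the assertion. I expect the main obstacle to lie in the middle step: confirming that the foliation $\mathcal{F}$ arising from the curvature splitting is algebraically integrable with rationally connected leaves and that it genuinely \emph{coincides} with the MRC foliation, rather than merely being contained in or containing the MRC relative tangent sheaf. Once the rational connectedness of the leaves and this matching are secured, the regularity of $\mathcal{F}$ makes the passage to an honest morphism via \cite{Hor07} essentially formal, and the smoothness of the base $Y$ follows from the smoothness of the foliation.
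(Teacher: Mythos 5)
Your overall skeleton (curvature splitting plus H\"oring's result on foliations with rationally connected leaves) is the same as the paper's, but there is a genuine gap at the very first step: you assert that Theorem \ref{key-thm} yields a decomposition $T_X = \mathcal{F} \oplus \mathcal{F}'$ into holomorphic \emph{subbundles on all of} $X$. It does not. Theorem \ref{key-thm} produces the holomorphic orthogonal splitting only on the Zariski open set $X_o$ on which the MRC fibration is a morphism; across the indeterminacy locus $X \setminus X_o$ you have, a priori, nothing. Bridging this is the actual content of the paper's proof: one defines $V$ and $W$ as the double duals of the pushforwards $i_*$ of the two summands from $X_o$ (hence reflexive sheaves on $X$), invokes Hartshorne's extension theorem for reflexive sheaves (Lemma \ref{ref-lem}, using $\codim (X \setminus X_o) \geq 2$) to extend the isomorphism $T_X \cong V \oplus W$ to all of $X$, and then uses upper semicontinuity of the fiber dimensions of $V$ and $W$ together with the fact that they sum to the constant $\dim X$ (Lemma \ref{Har-lem}) to conclude that $V$ and $W$ are locally free, i.e.\ genuine subbundles. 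Without this step there is no regular foliation on $X$, and Lemma \ref{fol-lem} (H\"oring's Corollary 2.11) simply cannot be applied.

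A second, smaller point: you locate the ``main obstacle'' in proving that the leaves of $\mathcal{F}$ are rationally connected and that $\mathcal{F}$ coincides with the MRC foliation, and you propose to attack it by producing free rational curves tangent to $\mathcal{F}$. In the paper's construction this issue is essentially free of charge, because the foliation is not built abstractly from the curvature and then matched to the MRC map: it is built \emph{from} the MRC fibration itself, so that $W|_{X_o} = T_{X_o/Y_o}$ by construction. Consequently a general fiber of $\phi$ is a leaf of $W$, it is compact because the MRC fibration is almost holomorphic, and it is rationally connected by definition of an MRC fibration --- and Lemma \ref{fol-lem} needs only \emph{one} such leaf. Integrability of $W$ is likewise soft: the obstruction section $F_W \in H^0(X, {\mathit{Hom}}(\Lambda^2 W, T_X/W))$ vanishes on $X_o$ and hence identically, since that sheaf is torsion free. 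Finally, that the resulting smooth morphism $\phi' : X \to Y'$ is again an MRC fibration follows because general fibers of $\phi$ and $\phi'$ agree, yielding a birational map $Y \dashrightarrow Y'$ over which the two maps are identified. So the hard part is the sheaf-theoretic extension you skipped, not the leaf geometry you flagged.
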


The following theorem, which can be seen as a generalization of the main result in \cite{Mat18b}, 
reveals detailed properties of a morphism $\phi: X \to Y$ 
to a compact K\"ahler manifold $Y$ with pseudo-effective canonical bundle. 
The proof is based on the theory of foliations and Ehresmann's theorem. 
By applying Theorem \ref{mainn-thm} to the MRC fibration $\phi: X \to Y$ 
chosen to be a morphism by Theorem \ref{main-thm}, 
we can obtain all the conclusions in Theorem \ref{m-thm}. 
In our situation, the fiber $F$ is rationally connected  (in particular, simply connected), 
and thus it is sufficient for Theorem \ref{m-thm} to consider only the restricted case. 
However, we formulate Theorem \ref{mainn-thm} in a more general statement, 
since it seems to be important to treat Albanese maps (instead of MRC fibrations)  
in order to generalize Theorem  \ref{m-thm} to non-projective varieties.

\begin{theo}\label{mainn-thm}
Let $(X, g)$ be a compact K\"ahler manifold with semi-positive holomorphic sectional curvature, and 
let $\phi: X \to Y$ be a morphism to a compact K\"ahler manifold $Y$ with pseudo-effective canonical bundle. 
\vspace{0.2cm}\\
$(1)$ The following statements hold\,$:$
\begin{itemize}
\item The morphism $\phi: X \to Y$ is smooth and locally trivial. 
\item The exact sequence of vector bundles 
$$
0 \xrightarrow{\quad \quad} T_{X/Y} 
\xrightarrow{\quad \quad} T_X \xrightarrow{\quad d\phi_* \quad} \phi^{*} T_Y 
\xrightarrow{\quad \quad}  0
$$
admits the holomorphic orthogonal splitting, 
that is, the orthogonal complement $T_{X/Y}^{\bot}$ is a holomorphic vector bundle and 
there exists an isomorphism $j: \phi^{*} T_Y \to T_{X/Y}^{\bot}$ 
such that it gives the holomorphic orthogonal decomposition
$$
T_X = T_{X/Y} \oplus j(\phi^{*} T_Y) \cong T_{X/Y} \oplus \phi^{*} T_Y.
$$ 
\item The image $Y$ admits a K\"ahler metric $g_{Y}$ 
such that $g_Q= \phi^{*} g_{Y}$ and the holomorphic sectional curvature of $g_Y$ is identically zero. 
In particular, the image $Y$ has a finite \'etale cover $T \to Y$ by a complex torus $T$. 
Here $g_Q$ is the hermitian metric on $\phi^{*} T_Y$ 
induced by the above exact sequence and the metric $g$. 
\end{itemize}
$(2)$ We obtain the isomorphism 
$$
X_{\rm{univ}} \cong \mathbb{C}^m \times F_{\rm{univ}}. 
$$
Here $m$ is the dimension of $Y$. 
\vspace{0.2cm}\\
$(3)$ There exists a K\"ahler metric $g_{F_{\rm{univ}}}$ on ${F_{\rm{univ}}}$ 
such that the holomorphic sectional curvature of $g_{F_{\rm{univ}}}$ is semi-positive 
and the above isomorphism $X_{\rm{univ}} \cong \mathbb{C}^m \times F_{\rm{univ}}$ 
is isometric with respect to the K\"ahler metrics induced by $g$, $g_Y$,  and $g_{F_{\rm{univ}}}$. 
\end{theo}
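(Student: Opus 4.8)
The plan is to establish all three assertions over the open locus where $\phi$ is a submersion, to extract from the curvature hypothesis a flat structure on the base together with an orthogonal splitting of the tangent sequence, and then to globalize by the theory of foliations and Ehresmann's theorem. Let $X^{\circ} \subset X$ be the dense open locus where $d\phi$ has maximal rank $m := \dim Y$. Over $X^{\circ}$ the differential $d\phi_{*} \colon T_X \to \phi^{*}T_Y$ is surjective, so it presents $Q := \phi^{*}T_Y$ as a holomorphic quotient of $T_X$, on which $g$ induces the quotient metric $g_Q$. Following the curvature computations of \cite{Mat18b} (whose input is precisely the semi-positivity of the holomorphic sectional curvature of $g$, in the guise of RC-positivity), the quotient $(Q,g_Q)$ inherits semi-positivity and the quotient metric is basic, so that it descends to a Kähler metric $g_Y$ on $Y$ with $g_Q = \phi^{*}g_Y$ whose holomorphic sectional curvature is semi-positive.

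Next I would upgrade this semi-positivity to flatness, this being where the pseudo-effectivity of $K_Y$ enters. Since the scalar curvature of a Kähler metric is, up to a positive constant, the pointwise average of its holomorphic sectional curvatures, semi-positivity gives scalar curvature $\ge 0$, hence $-K_Y \cdot [\omega_Y]^{\,m-1} = c_1(Y)\cdot[\omega_Y]^{\,m-1} \ge 0$. Pseudo-effectivity of $K_Y$ forces the reverse inequality, so $K_Y \cdot [\omega_Y]^{\,m-1}=0$ and $K_Y$ is numerically trivial; the scalar curvature then integrates to zero, and being non-negative it vanishes identically, whence every holomorphic sectional curvature of $g_Y$ vanishes. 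As the curvature tensor of a Kähler metric is determined by its holomorphic sectional curvature, $g_Y$ is flat, and Bieberbach's theorem provides a finite \'etale cover $T \to Y$ by a complex torus. Revisiting the Gauss--Codazzi comparison for the quotient $Q$, the chain of inequalities has collapsed to equalities, and the equality case forces the second fundamental form of $T_{X/Y}\hookrightarrow T_X$ to vanish; this yields the holomorphic orthogonal splitting $T_X = T_{X/Y}\oplus j(\phi^{*}T_Y)$ asserted in part $(1)$ (cf.\ the splitting theorem, Theorem~\ref{key-thm}). \textbf{This equality-case step is the main obstacle:} semi-positivity supplies only inequalities, so the argument must route through the global identity above and then localize it, and one must pass from the vanishing of the diagonal terms $|\beta(v)\,v|^{2}$, all that holomorphic sectional curvature controls, to the vanishing of the full second fundamental form by a polarization that is delicate precisely because strict positivity is unavailable.

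With the splitting at hand, $H := T_{X/Y}^{\perp}\cong \phi^{*}T_Y$ is a holomorphic distribution transverse to the fibers whose vanishing second fundamental form makes it an integrable, totally geodesic foliation with flat transverse structure. I would first promote $X^{\circ}$ to all of $X$: the flat horizontal directions cannot degenerate, so $d\phi$ has full rank everywhere and $\phi$ is a holomorphic submersion between compact manifolds, hence proper. Ehresmann's theorem then makes $\phi$ a locally trivial $C^{\infty}$ fiber bundle, and the foliation theory of \cite{Hor07}---parallel transport along the flat connection $H$ being holomorphic---upgrades this to holomorphic local triviality, so that all fibers are isomorphic to a fixed fiber $F$. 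This proves part $(1)$.

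For parts $(2)$ and $(3)$ I would pass to universal covers. The flat base lifts to $\mathbb{C}^{m}$, and the integrable horizontal foliation $H$ with vanishing second fundamental form, together with the completeness of $(X_{\rm univ}, \mu^{*}g)$, produces a de Rham type decomposition that is simultaneously biholomorphic and isometric, $X_{\rm univ}\cong \mathbb{C}^{m}\times F_{\rm univ}$, the first factor carrying the flat metric $\pi^{*}g_Y$ and the second a Kähler metric $g_{F_{\rm univ}}$. Since the splitting is orthogonal and the leaves are totally geodesic, the restriction of $g$ to the fiber factor has semi-positive holomorphic sectional curvature, which gives the asserted properties of $g_{F_{\rm univ}}$ and completes the proof.
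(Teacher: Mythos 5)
Your overall architecture---descend a metric to $Y$ via the computations of \cite{Mat18b}, force flatness from pseudo-effectivity of $K_Y$, extract the holomorphic orthogonal splitting from the equality case, then globalize by foliations and split the universal cover---tracks the paper's proof fairly closely, and your flatness argument (non-negative scalar curvature versus $K_Y$ pseudo-effective, then polarization of the vanishing holomorphic sectional curvature of $g_Y$) is sound. The genuine gap is your passage from the submersion locus $X^{\circ}$ to all of $X$. The sentence ``the flat horizontal directions cannot degenerate, so $d\phi$ has full rank everywhere'' is not an argument: a priori the degeneracy locus of $d\phi$ is a divisor, and nothing in the differential-geometric picture you have built on $X^{\circ}$ prevents the horizontal distribution and the descended metric from degenerating along it. Proving that $\phi$ is a submersion everywhere is precisely where the global, current-theoretic input enters: one equips $(\phi^{*}K_Y)^{\vee}$ with the (a priori singular) metric induced by $g$, writes its curvature as a smooth form plus the integration current of an effective degeneracy divisor $E$, and uses pseudo-effectivity of $K_Y$ together with a Stokes/Fubini argument on the fiberwise scalar curvature to force $E=0$. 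That is the content of \cite[Theorem 1.4]{Mat18b} (and of the proof of Theorem \ref{key-thm}), which the paper simply cites at this point; it cannot be recovered from the local picture on $X^{\circ}$. Your route is also mildly circular: the integral identity you use to prove flatness presupposes that $g_Y$ is a smooth K\"ahler metric on all of $Y$, i.e.\ that the descent has already been carried out across the non-submersion locus---which is part of what is still to be established.

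The remaining steps are essentially fine. For the vanishing of the second fundamental form you flag the equality case as ``the main obstacle'' and gesture at a polarization; note that bare polarization cannot work (semi-positivity of $H_g$ controls only diagonal curvature terms), and the actual mechanism is the minimizer lemma of \cite[Lemma 4.1]{Yan18a}: from Gauss--Codazzi and the flatness of $g_Q=\phi^{*}g_Y$ one first gets $H_g(e)=0$ for horizontal $e$, so $e$ is a minimizer of $H_g$, whence $R_g(v,\bar v,e,\bar e)\ge 0$ for \emph{every} $v$, and then Gauss--Codazzi again forces $B\equiv 0$. Citing Theorem \ref{key-thm} for this is legitimate (the paper does the same), though strictly speaking its statement concerns projective MRC fibrations. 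Your de Rham--type splitting for parts $(2)$ and $(3)$ is a genuine alternative to the paper's route, which instead applies Ehresmann's theorem (Lemma \ref{Ehr-lem}) to obtain the biholomorphic product and then proves the isometry by an explicit vector-field computation ($[\widetilde v,\widetilde w]=0$ and $D_{\widetilde v}\widetilde w=0$); the de Rham approach works because $A=B=0$ makes both distributions parallel for the Levi--Civita (equals Chern) connection on the K\"ahler manifold, but you should still note that integrability of the horizontal distribution uses torsion-freeness, that the complex structure splits along with the metric, and that the flat simply connected factor is identified with $\mathbb{C}^{m}$.
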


\subsection*{Acknowledgements}
The author is indebted to Professor Junyan Cao 
for giving him an opportunity to consider holomorphic sectional curvature 
and Professor Xiaokui Yang for stimulating conversations on related topics. 
He would like to thank Professor Hajime Tsuji and Professor  Shigeharu Takayama 
for giving a helpful comment. 
He is supported by the Grant-in-Aid 
for Young Scientists (A) $\sharp$17H04821 from JSPS.

\section{Preliminaries}\label{sec2} 
In this section, we shortly summarize several results needed later by giving references. 
We can check basic properties of holomorphic sectional curvature in \cite{Mat18a} and \cite{Mat18b} with the same notation as in this paper. 

\subsection{Connections of vector bundles}\label{subsec2-1}
In this subsection, we recall the Gauss-Codazzi type formula for exact sequences of (holomorphic) vector bundles. 
We consider the exact sequence 
$$
0 \xrightarrow{\quad \quad} (S, g_S) \xrightarrow{\quad \quad}
(E, g) \xrightarrow{\quad  p \quad}(Q, g_Q)
\xrightarrow{\quad \quad} 0
$$ 
of vector bundles $S$, $E$, and $Q$ on a complex manifold $X$. 
We suppose that $E$ is equipped with a (smooth) hermitian metric $g$. 
Let $S^{\bot}$ be the orthogonal complement of $S$ in $E$ with respect to $g$, 
which is a $C^\infty$-bundle but not always a holomorphic vector bundle. 
The $C^\infty$-bundle $S^{\bot}$ is isomorphic to the quotient bundle $Q$ as $C^\infty$-bundles. 
More precisely,   there exists the $C^\infty$-bundle isomorphism $j: Q \cong S^{\bot}$ such that $p \circ j={\rm id}_Q$. 
In this paper, we often identify the orthogonal complement $S^{\bot}$  with the quotient bundle $Q$ under the isomorphism $j$. 
Then we have the orthogonal decomposition
\begin{align*}
E=S \oplus S^{\bot} = S \oplus j(Q) \cong S \oplus Q \text{ as $C^\infty$-bundles.}
\end{align*}
From this orthogonal decomposition, the hermitian metric $g_S$ (resp. $g_Q$) on $S$ (resp. $Q \cong S^{\bot}$) is induced. 
For the hermitian vector bundles $(E,g)$, $(S,g_S)$, $(Q, g_Q)$, 
we  respectively denote their Chern connection by $D$, $D_S$, $D_Q$  
and their Chern curvature by $\sqrt{-1}\Theta_{g}$, $\sqrt{-1}\Theta_{g_S}$, $\sqrt{-1}\Theta_{g_Q}$. 
Now we can define 
$$
A\in C^{\infty}(X, \Lambda^{1,0}\otimes \Hom(S, S^{\bot})) 
\text { and }
B\in C^{\infty}(X, \Lambda^{0,1}\otimes \Hom(S^{\bot}, S))
$$ 
to be 
$$
D(f)=D_S(f) + A (f) \text{ \quad and \quad } D(e)=B (e) + D_Q(e)  \
$$
for a (local) section $e $ of $S^{\bot}$ and a section $f $ of $S$. 
It is known that $A$ (equivalently $B$)  is  identically zero if and only if 
the above exact sequence admits the holomorphic orthogonal splitting, 
which means that $S^{\bot}$ is a holomorphic vector bundle and 
the above orthogonal decomposition gives the holomorphic decomposition of $E$. 
Further we also have  the following properties\,$:$
\begin{align}
\big \langle \sqrt{-1}\Theta_{g}(v, \bar v)(e), e \big \rangle_g + 
\big \langle B_{\bar v}  (e), B_{\bar v}  (e) \big \rangle_{g_S} &= 
\big \langle \sqrt{-1}\Theta_{g_Q}(v, \bar v)(e), e \big \rangle_{g_Q}, \label{eq-1} \\
\big \langle \sqrt{-1}\Theta_{g}(v, \bar v)(f), f \big \rangle_g - 
\big \langle A_v (f), A_v (f) \big \rangle_{g_Q} &= 
\big \langle \sqrt{-1}\Theta_{g_S}(v, \bar v)(f), f \big \rangle_{g_S} \notag 
\end{align}
for a tangent vector $v \in T_X$, a vector $e \in S^{\bot}$, and a vector $f \in S$.

\subsection{Holomorphic foliations}\label{subsec2-2}

The theory of (holomorphic) foliation plays an important role in the proof of the main theorem. 
In this subsection, we recall some results of foliations needed later. 
Let $T_X$ be the (holomorphic) tangent bundle on a complex manifold $X$ and 
let $V$ be a (holomorphic) subbundle of $T_X$. 
The subbundle $V \subset T_X$ is called an {\textit{integrable foliation}} 
if the space of (local) sections of $V$ is closed under the Lie bracket 
$$
[\bullet , \bullet]: T_X \times T_X \to T_X. 
$$ 
Frobenius's theorem asserts the integrability of $V$ is equivalent to 
the condition that an arbitrary point in $X$ has an open neighborhood $U$ 
and a smooth morphism $\phi: U \to \mathbb{C}^{m}$ 
such that  $V$ coincides with the relative tangent bundle of $\phi$ (that is, $V=T_{U/\mathbb{C}^{m}}:= \Ker d\phi_*$), 
where $m:=\dim X - {\rm{rank}}\, V$. 
For the subbundle $V\subset T_X$, 
the Lie bracket determines the $\mathcal{O}_X$-module morphism $\Lambda^2 V \to T_X/V$, 
and thus we obtain the holomorphic section 
$$
F_V:=[\bullet, \bullet] \in H^{0}(X, {\mathit{Hom}}(\Lambda^2 V, T_X/V )) 
$$
of the sheaf ${\mathit{Hom}}(\Lambda^2 V, T_X/V )$. 
The subbundle $V$ is integrable if and only if the above section $F_V$ is identically zero. 
If the subbundle $V$ is integrable on a (non-empty) Zariski open set of $X$, 
then $V$ is integrable on the whole space $X$. 
Indeed,  the section $F_V$ is vanishing on the Zariski open set in this case, 
and thus it follows that $F_V$ should be identically zero on $X$ 
since the sheaf ${\mathit{Hom}}(\Lambda^2 V, T_X/V )$ is torsion free as $\mathcal{O}_X$-modules.

In the case of $V$ being integrable, we can obtain the quotient map $X \to X/\simm$
by using the equivalence relation defined as follows\,$:$
a point $p$ is equivalent to a point $q$ if and only if $p$ and $q$ can 
be connected by fibers of local morphisms $U \to \mathbb{C}^{m}$. 
A fiber of  the quotient map $X \to X/\simm$ is called a {\textit{leaf} of 
the integrable foliation $V \subset T_X$. 
We remark that leaves are not necessarily compact and 
the quotient space  $X/\simm$ does not necessarily admit a complex structure. 
However, under some additional assumptions, 
it can be shown that  $X/\simm$ is a complex space such that 
the quotient map $X \to X/\simm$ is a holomorphic map. 

We will use Lemma \ref{fol-lem} in the proof of Theorem \ref{key-thm} and 
Lemma \ref{Ehr-lem} (so-called Ehresmann's theorem) in the proof of Theorem \ref{mainn-thm}. 

\begin{lemm}[{\cite[Corollary 2.11]{Hor07}}]\label{fol-lem}
Let $X$ be a compact K\"ahler manifold and $W \subset T_X$ be an integrable foliation of $X$. 
Assume that at least one leaf of the foliation $W$ is compact and rationally connected.

Then there exists a smooth morphism $X \to Y$ such that $W=T_{X/Y}$. 
\end{lemm}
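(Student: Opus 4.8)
The plan is to upgrade the single compact rationally connected leaf into a global fibration in three movements: first show that the leaves near the distinguished one are again compact and rationally connected, then propagate this to every leaf by an open--closed argument, and finally read off the morphism $X \to Y$ from the resulting leaf decomposition. Throughout I work with the foliation data of Subsection \ref{subsec2-2}, using the integrability of $W$ (the vanishing of the section $F_W$) and Frobenius's theorem to know that $W$ is \emph{locally} the relative tangent bundle of a submersion.

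First I would exploit that a rationally connected manifold is simply connected (Campana, Koll\'ar-Miyaoka-Mori). Applied to the given compact leaf $L_0$, this forces the holonomy of $L_0$ to be trivial, since the holonomy group is a quotient of $\pi_1(L_0)$. With trivial holonomy in hand, the holomorphic version of the Reeb local stability theorem yields a saturated neighbourhood $U \supset L_0$ together with a smooth morphism $U \to \Delta$ onto a polydisc whose fibers are exactly the leaves meeting $U$; in particular every leaf near $L_0$ is compact and diffeomorphic to $L_0$. Since rational connectedness is preserved under smooth proper deformation, these nearby leaves are again rationally connected.

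Next I would run an open--closed argument on the connected manifold $X$. Let $X_0 \subset X$ be the locus of points lying on a compact rationally connected leaf. The previous step shows $X_0$ is open. For closedness I would pass to the Barlet cycle space $\mathcal{C}(X)$ of the compact K\"ahler manifold $X$: on each component of $X_0$ the leaves are homologous, hence of bounded volume, so the relevant part of $\mathcal{C}(X)$ is compact and a sequence of such leaves admits a limit cycle $Z$, necessarily tangent to $W$. One then has to argue that $Z$ is a single reduced smooth rationally connected leaf rather than a degenerate configuration. Granting this, $X_0$ is closed, hence $X_0 = X$, so \emph{every} leaf of $W$ is compact and rationally connected with trivial holonomy. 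Finally, triviality of holonomy lets the local fibrations $U \to \Delta$ be glued compatibly, so the quotient $X \to X/\simm$ is realized by a globally defined smooth proper submersion onto a complex manifold $Y := X/\simm$ whose fibers are the leaves; by construction $W = T_{X/Y}$.

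I expect the closedness step to be the main obstacle: one must control the degeneration of a family of compact rationally connected leaves inside the cycle space, ruling out a drop in dimension, the appearance of several components, or a nonreduced structure in the limit $Z$. This is precisely the point where the compactness of the Barlet space, the fact that distinct leaves of a foliation are disjoint, and the rigidity coming from rational connectedness must be combined most carefully; everything else in the argument is comparatively formal once local stability and deformation invariance of rational connectedness are available.
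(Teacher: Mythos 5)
The paper does not prove this lemma at all: it is quoted from H\"oring \cite[Corollary 2.11]{Hor07}, so the relevant comparison is with the proof given there. Your outline reproduces the correct skeleton of that argument: simple connectedness of rationally connected manifolds kills the holonomy of the distinguished leaf, holomorphic Reeb local stability (as in \cite{CL}) makes the set $X_0$ of points lying on compact rationally connected leaves open, and deformation invariance of rational connectedness propagates the property to nearby leaves. All of this is sound. The problem is the closedness step, which you explicitly defer (``granting this''): that deferral is not a technical loose end but the entire mathematical content of the lemma.

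What you would have to rule out --- the limit cycle $Z$ being reducible, carrying multiplicities, or being supported on (the closure of) a non-compact leaf rather than being a single reduced leaf --- is precisely what the global stability theorem for holomorphic foliations provides: on a compact K\"ahler manifold, a holomorphic foliation admitting one compact leaf with finite holonomy has \emph{all} leaves compact with finite holonomy (Edwards--Millett--Sullivan in the smooth setting, and J.~V.~Pereira, \emph{Global stability for holomorphic foliations on K\"ahler manifolds}, in the holomorphic form needed here). H\"oring's Corollary 2.11 does not redo this degeneration analysis; it invokes this stability theorem, after which one only has to identify the leaf space: all leaves being compact, the quotient $Y$ is a compact complex space, openness and closedness of rational connectedness in the resulting family force every leaf to be rationally connected, hence simply connected, hence of trivial holonomy, which yields that $Y$ is smooth and $X \to Y$ is a submersion with $W = T_{X/Y}$. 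By contrast, your volume bound plus Bishop--Lieberman compactness only produces \emph{some} analytic limit cycle tangent to $W$; nothing in your outline prevents compact leaves from accumulating on a singular invariant set, and even the bounded-volume claim is delicate, since points approaching a boundary point of $X_0$ may a priori lie in different connected components of $X_0$, whose leaves need not be homologous to one another. So either quote Pereira's theorem at the closedness step (which is what the cited source does), or supply that degeneration analysis; as written, the step you grant is essentially equivalent to the statement being proved, and the argument is incomplete.
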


\begin{lemm}[{\cite[Section V]{CL} cf. {\cite[Theorem 3.17 ]{Hor07}}}]\label{Ehr-lem}
Let $\phi: X \to Y$ be a smooth morphism between compact complex manifolds $X$ and $Y$. 
Assume that there exists an integrable foliation $V \subset T_X$ such that $T_X= V \oplus T_{X/Y}$. 

Then $\phi: X \to Y$ is a locally trivial morphism. 
More precisely, there exists a representation $\rho : \pi_1(Y) \to \Aut(F)$ such that 
$X$ is isomorphic to $Y_{\rm{univ}} \times F/\pi_1(Y)$. 
Further the morphism 
$$
\mu: Y_{\rm{univ}} \times F_{\rm{univ}} \to Y_{\rm{univ}} \times F/\pi_1(Y) \cong X
$$ 
is the universal cover of $X$ and we have  
$$
\mu^{*}V = \pr_1^*(T_{Y_{\rm{univ}}}) \text{ and } 
\mu^{*}T_{X/Y} = \pr_2^*(T_{F_{\rm{univ}}}). 
$$ 
Here $Y_{\rm univ}$ and $F_{\rm univ}$ respectively denote the universal covers of $Y$ and $F$, 
and also $\pr_{i}$ denotes the projection of the product $Y_{\rm{univ}} \times F_{\rm{univ}}$ 
to the $i$-th component. 
\end{lemm}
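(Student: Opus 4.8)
The plan is to recognize $V$ as a \emph{flat} Ehresmann connection for the submersion $\phi$ and to integrate its holonomy over the universal cover $Y_{\rm univ}$. First I would note that the hypothesis $T_X = V \oplus T_{X/Y}$ says precisely that $d\phi_*$ restricts to a holomorphic isomorphism $V \xrightarrow{\sim} \phi^* T_Y$, so $V$ is a horizontal distribution transverse to every fiber, and on each leaf $L$ the map $\phi|_L \colon L \to Y$ is a holomorphic local biholomorphism. Since $X$ is compact, $\phi$ is proper and its fibers are compact; I can therefore lift any smooth path $\gamma$ in $Y$ horizontally (tangent to $V$) by solving the lifting ODE, and compactness of the fibers guarantees the lift is complete. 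This produces for each $\gamma$ from $p$ to $q$ a parallel-transport map $P_\gamma \colon \phi^{-1}(p) \to \phi^{-1}(q)$, which is a biholomorphism because the lift stays in a leaf and $\phi|_L$ is holomorphic.

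Next I pass to the universal cover. Pulling $\phi$ back along $\pi \colon Y_{\rm univ} \to Y$ gives a smooth proper morphism $\hat\phi \colon \hat X := Y_{\rm univ}\times_Y X \to Y_{\rm univ}$ carrying the pulled-back integrable foliation $\hat V$, still complementary to $T_{\hat X/Y_{\rm univ}}$. Integrability of $\hat V$ is the decisive input: it forces the horizontal lift of $\gamma$ to remain in a single leaf and makes $P_\gamma$ depend only on the homotopy class of $\gamma$ rel endpoints --- this is the assertion that an integrable Ehresmann connection is flat. Fixing a base point and setting $F := \hat\phi^{-1}(\hat y_0)$, horizontal path-lifting shows that for each $f \in F$ the leaf $L_f$ of $\hat V$ through $f$ is mapped by $\hat\phi$ as a connected covering onto the simply connected $Y_{\rm univ}$, hence biholomorphically; thus $L_f$ is the graph of a holomorphic section. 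Letting $f$ vary yields a biholomorphism $\Psi \colon Y_{\rm univ}\times F \xrightarrow{\sim} \hat X$, $(y,f)\mapsto (\hat\phi|_{L_f})^{-1}(y)$, which by construction sends $\pr_1^* T_{Y_{\rm univ}}$ to $\hat V$ and $\pr_2^* T_F$ to $T_{\hat X/Y_{\rm univ}}$.

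Finally I would descend. The deck group $\pi_1(Y)$ acts on $\hat X$ over its action on $Y_{\rm univ}$, with $\hat X/\pi_1(Y) = X$, and it preserves both $\hat V$ and the fibration; transporting this action through $\Psi$ makes it act as $g\cdot(y,f)=(g\,y,\rho(g)f)$ for a well-defined homomorphism $\rho \colon \pi_1(Y)\to \Aut(F)$, since the action permutes leaves and fibers compatibly and restricts to deck transformations on the base. This gives $X\cong (Y_{\rm univ}\times F)/\pi_1(Y)$ and in particular the local triviality of $\phi$; unwinding the residual fundamental group in the $F$-factor identifies the universal cover of $X$ with $Y_{\rm univ}\times F_{\rm univ}$ and, reading off $\Psi$, yields $\mu^* V = \pr_1^* T_{Y_{\rm univ}}$ and $\mu^* T_{X/Y}=\pr_2^* T_{F_{\rm univ}}$. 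I expect the main obstacle to be the flatness step, namely showing that integrability of $V$ upgrades the classical (merely locally trivial) Ehresmann conclusion to an honest holonomy representation and forces each leaf to cover $Y_{\rm univ}$, together with verifying that parallel transport is holomorphic rather than only smooth; the compactness of $X$ is exactly what makes the horizontal lifts complete.
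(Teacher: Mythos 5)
Your proof is correct: it is precisely the classical holonomy/suspension argument for Ehresmann's theorem --- complete horizontal lifting of paths via compactness of $X$, flatness of the induced connection from integrability of $V$ so that each leaf covers the base, trivialization of the pulled-back family over the simply connected $Y_{\rm univ}$ by realizing leaves as graphs of sections, and descent through the holonomy representation $\rho:\pi_1(Y)\to\Aut(F)$. The paper gives no proof of this lemma, quoting it from Camacho--Lins Neto [CL, Section V] (cf.\ H\"oring, Theorem 3.17), and the argument in those references is essentially the one you reconstruct, so your proposal matches the intended proof.
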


In order to apply Lemma \ref{fol-lem}, we will construct foliations by using the following lemmas and Theorem \ref{key-thm}.

\begin{lemm}[{\cite[Ex. 5.8]{Har77}}]\label{Har-lem}
Let $\mathcal{H}$ be a coherent sheaf on a complex manifold $X$. 
Then the function 
$$X \ni p \longmapsto
\dim \mathcal{H}_{p} \otimes_{\mathcal{O}_{X,p}}\mathcal{O}_{X,p}/\mathfrak{m}_p \in \mathbb{Z}
$$
is upper semi-continuous with respect to $p \in X$. 
Further, when the above function is constant, 
the sheaf $\mathcal{H}$ is a locally free sheaf. 
\end{lemm}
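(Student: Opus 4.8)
The plan is to prove both assertions via Nakayama's lemma applied to the stalks, using that each $\mathcal{O}_{X,p}$ is a Noetherian local ring (the ring of convergent power series) and that each stalk $\mathcal{H}_p$ is a finitely generated $\mathcal{O}_{X,p}$-module by coherence. For brevity write $k(p) := \mathcal{O}_{X,p}/\mathfrak{m}_p$ for the residue field and $d(p) := \dim_{k(p)} \mathcal{H}_p \otimes_{\mathcal{O}_{X,p}} k(p)$ for the fiber dimension in question.

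First I would establish upper semi-continuity. Fix $p \in X$ and choose germs $s_1, \ldots, s_n$ of $\mathcal{H}$ at $p$ whose images form a basis of the fiber $\mathcal{H}_p \otimes_{\mathcal{O}_{X,p}} k(p)$, so that $n = d(p)$. By Nakayama's lemma these germs generate the whole stalk $\mathcal{H}_p$, and hence, since the cokernel of the induced morphism $\mathcal{O}_X^{\oplus n} \to \mathcal{H}$ is coherent with vanishing stalk at $p$, they generate $\mathcal{H}$ on an open neighborhood $U$ of $p$. For every $q \in U$ the images of the $s_i$ then span $\mathcal{H}_q \otimes_{\mathcal{O}_{X,q}} k(q)$, so that $d(q) \le n = d(p)$. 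This shows that $\{q : d(q) \le d(p)\}$ contains a neighborhood of $p$, which is exactly the upper semi-continuity of $d$.

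Next, assuming $d \equiv n$ is constant, I would promote the surjection above to an isomorphism. On the neighborhood $U$ we have a short exact sequence
$$
0 \xrightarrow{\quad} \mathcal{K} \xrightarrow{\quad} \mathcal{O}_U^{\oplus n} \xrightarrow{\quad \psi \quad} \mathcal{H}|_U \xrightarrow{\quad} 0,
$$
in which $\mathcal{K}$ is coherent, being the kernel of a morphism of coherent sheaves. Tensoring the stalk at any $q \in U$ with $k(q)$ produces the right-exact sequence $\mathcal{K}_q \otimes k(q) \to k(q)^{\oplus n} \to \mathcal{H}_q \otimes k(q) \to 0$. Since the last space has dimension $n = d(q)$, the surjection $k(q)^{\oplus n} \to \mathcal{H}_q \otimes k(q)$ is an isomorphism of $k(q)$-vector spaces, which forces the map $\mathcal{K}_q \otimes k(q) \to k(q)^{\oplus n}$ to be zero; equivalently $\mathcal{K}_q \subseteq \mathfrak{m}_q \, \mathcal{O}_{U,q}^{\oplus n}$ for every $q \in U$.

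Finally I would convert this fiberwise vanishing into the genuine vanishing $\mathcal{K} = 0$, and this is the step where the complex-manifold hypothesis is indispensable. A germ in $\mathcal{K}_p$ is represented by a tuple $(a_1, \ldots, a_n)$ of holomorphic functions on a small neighborhood $V$ of $p$, and the inclusion $\mathcal{K}_q \subseteq \mathfrak{m}_q \mathcal{O}_{U,q}^{\oplus n}$ at every $q \in V$ says precisely that $a_i(q) = 0$ for all $i$ and all $q \in V$. Because $X$ is a complex manifold, hence reduced, a holomorphic function vanishing at every point is identically zero, so each $a_i \equiv 0$ and the germ is trivial. Thus $\mathcal{K}_p = 0$, and by coherence $\mathcal{K}$ vanishes on a neighborhood of $p$, yielding the local isomorphism $\mathcal{H} \cong \mathcal{O}_U^{\oplus n}$ and so the local freeness of $\mathcal{H}$. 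I expect this final transition, from fiberwise vanishing to honest vanishing, to be the only real subtlety of the argument, since it is exactly the place where reducedness enters and where the analogous statement breaks down over non-reduced spaces.
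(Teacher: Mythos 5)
Your proof is correct. The paper gives no argument of its own here---it simply cites Hartshorne, Ex.~II.5.8---and what you have written is exactly the standard solution of that exercise, correctly transposed to the analytic category: Nakayama's lemma on the Noetherian stalks $\mathcal{O}_{X,p}$ gives upper semi-continuity, and in the constant-rank case the reducedness of a complex manifold is precisely what kills the kernel sheaf $\mathcal{K}$, which you rightly identify as the one genuinely delicate step.
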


\begin{lemm}[{\cite[Proposition 1.6, Corollary 1.4, Corollary 1.7]{Har80}}]\label{ref-lem}
Let $\mathcal{F}$ and $\mathcal{G}$ are reflexive coherent sheaves on  a normal variety $X$. 
If there exists a Zariski open set $X_o \subset X$ such that 
$\codim (X \setminus X_o) \geq 2$ and $\mathcal{F} \cong \mathcal{G}$ on $X_o$, 
then we have $\mathcal{F} \cong \mathcal{G}$ on the ambient space $X$. 
\end{lemm}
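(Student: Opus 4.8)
The plan is to reinterpret the given isomorphism over $X_o$ as a global section of an auxiliary reflexive $\mathcal{H}om$-sheaf and then to invoke the extension (``normality'') property of reflexive sheaves across a closed set of codimension at least two. The structural input I would rely on is the standard fact that, on a normal variety, whenever the target $\mathcal{G}$ is reflexive and the source $\mathcal{F}$ is coherent, the sheaf $\mathcal{H}om(\mathcal{F},\mathcal{G})$ is again reflexive, together with the characterization of reflexivity as torsion-freeness plus the property that for a reflexive sheaf $\mathcal{R}$ and the open immersion $j\colon X_o \hookrightarrow X$ with $\codim(X \setminus X_o) \geq 2$ one has the canonical identification $\mathcal{R} = j_*(\mathcal{R}|_{X_o})$. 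These are precisely the cited results, and once they are in hand the lemma becomes a formal bookkeeping argument about extending homomorphisms.

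Concretely, first I would write $\varphi \colon \mathcal{F}|_{X_o} \to \mathcal{G}|_{X_o}$ for the given isomorphism and $\psi := \varphi^{-1}$ for its inverse. These are sections over $X_o$ of $\mathcal{H}om(\mathcal{F},\mathcal{G})$ and $\mathcal{H}om(\mathcal{G},\mathcal{F})$ respectively. Since $\mathcal{F}$ and $\mathcal{G}$ are reflexive and $X$ is normal, both of these $\mathcal{H}om$-sheaves are reflexive, so the extension property applies: $\varphi$ extends uniquely to a homomorphism $\widetilde{\varphi}\colon \mathcal{F} \to \mathcal{G}$ on all of $X$, and likewise $\psi$ extends uniquely to $\widetilde{\psi}\colon \mathcal{G} \to \mathcal{F}$.

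Next I would verify that $\widetilde{\varphi}$ is an isomorphism. The composite $\widetilde{\psi} \circ \widetilde{\varphi}$ is a global section of the reflexive sheaf $\mathcal{E}nd(\mathcal{F}) = \mathcal{H}om(\mathcal{F},\mathcal{F})$ that restricts to $\mathrm{id}_{\mathcal{F}}$ on $X_o$. By the uniqueness half of the extension property for sections of a reflexive sheaf, $\widetilde{\psi} \circ \widetilde{\varphi} = \mathrm{id}_{\mathcal{F}}$ on the whole of $X$; symmetrically, $\widetilde{\varphi} \circ \widetilde{\psi} = \mathrm{id}_{\mathcal{G}}$. Hence $\widetilde{\varphi}$ is the desired isomorphism $\mathcal{F} \cong \mathcal{G}$ over the ambient space $X$.

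The part demanding the most care is not the gluing of the maps, which is purely formal, but the two inputs that make it work: the reflexivity of $\mathcal{H}om(\mathcal{F},\mathcal{G})$ and the precise formulation of the extension/normality statement. This is exactly where both hypotheses are indispensable: the normality of $X$ (so that one controls depth and singularities in codimension one) and the condition $\codim(X \setminus X_o) \geq 2$ (so that sections and their identities extend uniquely). I therefore expect no genuinely new ingredient beyond assembling the cited results of Hartshorne.
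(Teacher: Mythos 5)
Your proof is correct and is exactly the argument the paper intends: the lemma is quoted directly from Hartshorne's results (reflexivity of $\mathcal{H}om(\mathcal{F},\mathcal{G})$ when the target is reflexive, and the characterization of reflexive sheaves as torsion-free plus normal, giving unique extension of sections across closed sets of codimension $\geq 2$), and your assembly --- extending $\varphi$ and $\varphi^{-1}$ to $\widetilde{\varphi}$, $\widetilde{\psi}$ and using uniqueness of extensions to get $\widetilde{\psi}\circ\widetilde{\varphi}=\mathrm{id}_{\mathcal{F}}$ and $\widetilde{\varphi}\circ\widetilde{\psi}=\mathrm{id}_{\mathcal{G}}$ --- is the standard deduction those citations encode. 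Since the paper gives no proof beyond the citation, there is no divergence to report.
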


\section{Proof of the main result}\label{sec3}

The purpose of this section is to give a proof of Theorem \ref{m-thm}. 
The strategy of the proof is as follows\,$:$
We will show that a suitably chosen MRC fibration $\phi: X \dashrightarrow Y$ of $X$ satisfies the desired properties in Theorem \ref{m-thm}. 
MRC fibrations of projective varieties are not uniquely determined 
since there are ambiguities in the choices of the birational models of its image $Y$ 
(see \cite{KoMM92} and \cite{Cam92} for MRC fibrations). 
Therefore one of the main difficulties in the proof is to show that a MRC fibration of $X$ can be chosen to be a morphism without indeterminacy locus.
For this purpose, for a given MRC fibration $\phi: X \dashrightarrow Y$, 
we first prove that the tangent bundle $T_X$ can be decomposed into $T_X \cong T_{X/Y} \oplus \phi^{*}T_Y$ 
on some Zariski open $X_o$ satisfying $\codim (X \setminus X_o) \geq 2$ (see Theorem \ref{key-thm}).  
From this holomorphic splitting, 
we can choose a MRC fibration $\phi': X \to Y'$ without indeterminacy locus 
(which may be different from the original one $\phi: X \dashrightarrow Y$),  
by applying the result in \cite{Hor07} 
for foliation with rationally connected leaves (see Theorem \ref{main-thm}). 
Finally, we obtain various properties in Theorem \ref{m-thm}, 
by developing techniques in \cite{Mat18b} and by using the theory of foliations  again (see Theorem \ref{mainn-thm}). 

We first prove the following splitting theorem on a Zariski open set by generalizing the argument of \cite[Theorem 1.7]{Mat18b}.

\begin{theo}\label{key-thm}
Let $X$ be a smooth projective variety 
admitting a K\"ahler metric  with semi-positive holomorphic sectional curvature, 
and let $\phi: X \dashrightarrow Y$ be a $($non-trivial$)$ MRC fibration to a smooth projective variety $Y$. 
We take a Zariski open set $X_o$ in $X$ such that the restriction of $\phi$  
$$
\phi_o:=\phi|_{X_o}: X_o \to Y_o:=\phi(X_o) \subset Y 
$$ 
is a morphism.

Then the morphism $\phi$ is a smooth morphism on $X_o$. 
Moreover the standard exact sequence 
$$
0 \to T_{X/Y} \to T_X \to \phi^{*} T_Y \to 0 \text{ on $X_o$}
$$
admits the holomorphic orthogonal splitting, 
that is, the orthogonal complement $T_{X/Y}^{\bot}$ is a holomorphic vector bundle on $X_o$ 
and there exists an isomorphism $j: \phi^{*} T_Y \cong T_{X/Y}^{\bot}$ on $X_o$ such that 
$$
T_X =  T_{X/Y} \oplus j (\phi^{*} T_Y) \cong T_{X/Y} \oplus \phi^{*} T_Y \text{ on } X_o. 
$$
\end{theo}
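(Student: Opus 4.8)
The plan is to adapt the argument of \cite[Theorem 1.7]{Mat18b} from the absolute to the relative setting, the new feature being that $\phi$ is only a rational map and everything is carried out over the locus $X_o$. Write $Q:=\phi^{*}T_Y$ and $n:=\dim X$, and let $\omega$ be the K\"ahler form of $g$. First I would pass to the open submersion locus: by generic smoothness there is a Zariski open set $X_1\subset X_o$ on which $\phi$ is a submersion, so that
$$
0 \longrightarrow T_{X/Y} \longrightarrow T_X \xrightarrow{\ d\phi_*\ } Q \longrightarrow 0
$$
is an exact sequence of holomorphic vector bundles on $X_1$. Equipping $Q$ with the quotient metric $g_Q$ induced by $g$, and recalling from Subsection \ref{subsec2-1} that the splitting is equivalent to the vanishing of the second fundamental form $B$, the whole problem is reduced to proving $B\equiv 0$ on $X_1$.

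The core is a curvature identity. Taking a horizontal vector $v\in T_{X/Y}^{\bot}\cong Q$ and setting $e=v$ in the Gauss--Codazzi formula \eqref{eq-1} gives
$$
\big\langle \Cur{g_Q}{v}\,v,\,v\big\rangle_{g_Q} \;=\; \big\langle \Cur{g}{v}\,v,\,v\big\rangle_{g} \;+\; \big\|B_{\bar v}(v)\big\|_{g_S}^{2}.
$$
The first term on the right equals $H(v)\,|v|^{4}\ge 0$, where $H$ denotes the holomorphic sectional curvature of $g$; thus $g_Q$ again has semi-positive holomorphic sectional curvature, and the excess over $g$ is exactly $\|B_{\bar v}(v)\|^{2}$. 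Averaging this identity over $v$ in the unit sphere of each fibre $Q_x$ and invoking Berger's integration formula, the left-hand side becomes a positive multiple of the scalar curvature $\mathrm{Scal}(g_Q)$; hence pointwise on $X_1$ the quantity $\mathrm{Scal}(g_Q)$ is non-negative and splits as the sum of the (non-negative) averages of $H$ and of $\|B\|^{2}$, while its integral against the volume form computes, up to a positive constant, the intersection number $\int_X c_1(\det Q)\wedge\omega^{\,n-1}$ by Chern--Weil theory.

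The two signs are then played against each other. Since $Y$ is the base of an MRC fibration it is not uniruled, so $K_Y$ is pseudo-effective and, after resolving $\phi$ and using the projection formula to pair $c_1(K_Y)$ with the nef class $\phi_{*}(\omega^{\,n-1})$, one has $\int_X \phi^{*}c_1(K_Y)\wedge\omega^{\,n-1}\ge 0$. Because $\det Q=\phi^{*}(-K_Y)$, integrating the pointwise inequality and combining it with the Chern--Weil identification forces
$$
0\;\le\; c\!\int_{X_1} \mathrm{Scal}(g_Q)\,dV \;=\; c\!\int_X c_1(\det Q)\wedge\omega^{\,n-1}\;=\;-\,c\!\int_X \phi^{*}c_1(K_Y)\wedge\omega^{\,n-1}\;\le\;0
$$
for some constant $c>0$. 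Hence $\mathrm{Scal}(g_Q)\equiv 0$, so the average of $\|B\|^{2}$ vanishes and $B\equiv 0$ on $X_1$. This yields the holomorphic orthogonal splitting $T_X=T_{X/Y}\oplus T_{X/Y}^{\bot}$ with $T_{X/Y}^{\bot}\cong\phi^{*}T_Y$ on $X_1$, equivalently a holomorphic section $s\colon\phi^{*}T_Y\to T_X$ of $d\phi_*$. Regarding $s$ as a section of the reflexive sheaf $\mathit{Hom}(\phi^{*}T_Y,T_X)$ and applying Lemma \ref{ref-lem}, $s$ extends across $X_o\setminus X_1$; the relation $d\phi_*\circ s=\mathrm{id}$ persists by continuity, so $d\phi_*$ is surjective and $\phi$ is smooth on all of $X_o$, with the splitting valid there.

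I expect the main obstacle to be the rigour of the integration step rather than the pointwise computation. The metric $g_Q$, and with it $\mathrm{Scal}(g_Q)$, lives only on the submersion locus $X_1$ and may degenerate along its boundary, so identifying $\int_{X_1}\mathrm{Scal}(g_Q)\,dV$ with the cohomological number $\int_X c_1(\det Q)\wedge\omega^{\,n-1}$ requires controlling the boundary contributions; moreover, because $\phi$ is only a rational map, one must pull back $c_1(K_Y)$ through a resolution and check that the pseudo-effectivity pairing survives. Tied to this is the passage from $X_1$ to $X_o$: the reflexive extension of $s$, and hence smoothness on the whole of $X_o$, needs the degeneracy locus $X_o\setminus X_1$ to have codimension $\ge 2$, which must be ensured (or the codimension-one degeneration ruled out directly from the curvature positivity). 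This boundary and codimension bookkeeping, exactly the delicate passage from the smooth to the singular locus in \cite{Mat18b}, is where the real work lies.
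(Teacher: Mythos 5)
Your proposal has the right ``pincer'' structure (pseudo-effectivity of $K_Y$ against a curvature positivity), but the central identification is wrong, and it is wrong in a way that skips over one of the two key ideas of the actual proof. The quantity you average --- $\langle \sqrt{-1}\Theta_{g_Q}(v,\bar v)v,v\rangle_{g_Q}$ over unit horizontal vectors $v$ --- traces the curvature of $g_Q$ only over the $m$ \emph{horizontal} tangent directions, whereas the Chern--Weil integrand of $\int_X c_1(\det Q)\wedge\omega^{n-1}$ is the trace of $\sqrt{-1}\Theta_{\det g_Q}$ over \emph{all} $n$ directions of $T_X$. The missing vertical piece $\sum_{j=m+1}^{n}\sqrt{-1}\Theta_{\det g_Q}(e_j,\bar e_j)$ is exactly the scalar curvature of the fibers $(F, g|_F)$; it has no pointwise sign, and in the paper its integral is shown to be non-negative by Stokes's theorem and Fubini's theorem along the fibration (the argument quoted from \cite[Step 3]{Mat18b}). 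That step is a genuine idea, not the ``integration bookkeeping'' you defer in your last paragraph, and without it your chain $0\le c\int \mathrm{Scal}(g_Q)=\int_X c_1(\det Q)\wedge\omega^{n-1}\le 0$ simply does not close. (A smaller wrinkle in the same step: $g_Q$ is a quotient-bundle metric, so its curvature lacks the K\"ahler symmetry exchanging tangent and bundle slots; Berger's averaging therefore produces a horizontal Ricci trace \emph{plus} a mixed trace, not a clean multiple of a scalar curvature.)

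There are two further gaps. First, even granting $\mathrm{Scal}(g_Q)\equiv 0$, your averaging only forces $B_{\bar v}(v)=0$ for \emph{horizontal} $v$, hence by polarization $B_{\bar v}(e)=0$ for horizontal pairs; but the holomorphic splitting requires $B_{\bar v}(e)=0$ for every $v\in T_X$, including vertical $v$. The paper obtains this via the minimizer trick: the horizontal vectors $e_i$ satisfy $H_g([e_i])=0$, so they minimize the semi-positive function $H_g$, and \cite[Lemma 4.1]{Yan18a} then gives $\Rur{g}{v}{e_i}\ge 0$ for \emph{all} tangent vectors $v$; feeding this into the Gauss--Codazzi formula together with the flatness of $\det g_Q$ kills $B$ in every direction. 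This ingredient is entirely absent from your proposal. Second, your passage from $X_1$ to $X_o$ via reflexive extension needs $\codim(X_o\setminus X_1)\ge 2$, which you cannot assume: the degeneracy locus of $d\phi$ could a priori be a divisor. The paper circumvents this by working from the outset with the reflexive extension $L=(\tau_*\mathcal{O}(\bar\phi^*K_Y))^{\vee\vee}$ and its \emph{singular} metric; on a resolution the curvature current decomposes as $\gamma+[E]$ with $E$ an effective divisor, and the numerical triviality of $c_1(L)$ against $\omega^{n-1}$ forces $E=0$ --- that is precisely what rules out divisorial degeneration and yields smoothness of $\phi$ on all of $X_o$, rather than only on the submersion locus.
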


\begin{proof}[Proof of Theorem \ref{key-thm}]
The proof will be given by carefully refining the argument of \cite[Theorem 1.7]{Mat18b}, 
which is based on the idea of \cite{HW15} and techniques in \cite{Mat18a}. 
A part of the proof is essentially the same as \cite{Mat18b}, 
and thus we will explain the precise proof as we refer to \cite{Mat18b}.

For a non-trivial MRC fibration $\phi: X \dashrightarrow Y$ of a smooth projective variety $X$, 
we denote by the notation $Z_o$ the indeterminacy locus of $\phi$. 
It is sufficient to prove the conclusions for the Zariski open set  $X_o$ defined by $X_{o}:=X \setminus Z_o$ 
since $X_o$ is the maximal Zariski open set in $X$ 
such that the restriction 
$$
\phi_o:=\phi|_{X_o}: X_o \to Y_o:=\phi(X_o) \subset Y 
$$ 
is a morphism. 
Now we take a resolution $\tau: \Gamma \to X$ of the indeterminacy locus $Z_o$ of $\phi$ 
and denote by the notation $\bar \phi: \Gamma \to Y$ the morphism satisfying 
the following commutative diagram\,$:$
\begin{equation*}
\xymatrix@C=40pt@R=30pt{
 & \Gamma \ar[d]_\tau \ar[rd]^{\bar{\phi}\ \ }  & \\ 
\bar X \ar@{>}[r]^{\ \ \pi \ \ \ }& X \ar@{-->}[r]^{\phi \ \ \ }  &  Y.\\   
}
\end{equation*}
In the proof, we mainly consider the coherent sheaf $L$ on $X$ defined by 
$$
L:=\big( \tau_*  \mathcal{O}_X(\bar \phi^* K_Y) \big)^{\vee \vee}. 
$$
We remark that $L$ is actually an invertible sheaf since it is a reflexive sheaf of rank one (see \cite[Corollary 1.2, Proposition 1.9]{Har80}). 
We freely identify the locally free sheaves with the vector bundles and 
we denote by the notation $\mathcal{F}^{\vee}$ the dual of a coherent sheaf $\mathcal{F}$ in this paper. 
By the definition, 
the line bundle $L$ coincides with the usual pull-back $\phi^{*}K_Y$ on the Zariski open set $X_o$, 
and thus $L$  can be seen as the extension of the pull-back $\phi^{*}K_Y$ defined on $X_o$ 
to the ambient space $X$. 
From the sheaf morphism 
\begin{align*}
\bar \phi^* K_Y \xrightarrow{\quad d \bar \phi^{*} \quad} \Lambda^m \Omega_{\Gamma} = \Lambda^m T_{\Gamma} ^{\vee}\quad \text{ on } \Gamma 
\end{align*}
defined by the pull-back $d \bar \phi^{*}$ under $\bar \phi$, we can get the injective sheaf morphism 
$$
 L\xrightarrow{\quad f \quad} 
 \Lambda^m \Omega_{X}  = \Lambda^m T_{X} ^{\vee} \quad \text{ on } X
$$
by taking the push-forward $\tau_*$ under the modification $\tau$ and 
by using the formula 
$\tau_{*} \mathcal{O}_\Gamma(\Lambda^m \Omega_{\Gamma}) =\mathcal{O}_X(\Lambda^m \Omega_{X})$. 
Here $m$ is the dimension of $Y$.

Let $g$ be a K\"ahler metric of $X$ with the semi-positive holomorphic sectional curvature $H_g$. 
By using the (smooth) hermitian metric $\Lambda^m h$  on $ \Lambda^m \Omega_{X}$ 
induced by the dual metric $h:=g^{\vee}$ on the cotangent bundle $\Omega_{X}$, 
we define  the singular hermitian metric $H$ on $L$ to be  
$$
|e_L|_H:=|f(e_L)|_{\Lambda^m h} \text{ for a non-vanishing local section $e_L$ of $L$}, 
$$
where $f$ is the above sheaf morphism. 
The dual singular hermitian line bundle 
$$(L^\vee, G:=H^{\vee}:=H^{-1})
$$ 
plays a central role rather than $(L, H)$ in the proof (see \cite{Dem} for singular hermitian metrics). 
It is easy to see that the sheaf morphism $f$ is a bundle morphism 
(that is, the section $f(e_L)$ is also non-vanishing in our case) 
if and only if $G$ (equivalently $H$) is a smooth metric. 
In particular, since $f$ coincides with the usual pull-back $d \phi^{*}$ on $X_o$, 
we can conclude that $\phi$ is a smooth morphism on $X_o$ if $G$ can be shown to be smooth. 
This is our first goal.

We take a resolution $\pi: \bar{X} \to X$ of the degenerate ideal of $f$. 
Then, by the same argument as in \cite[Claim 1]{Mat18b}, 
it can be proven that there exist a smooth  $(1,1)$-form $\gamma$ on $\bar X$ and an effective divisor $E$ on $\bar X$ 
such that 
\begin{align}\label{eq-2}
2\pi c_1(\pi^{*}L^\vee) \ni  \pi^{*}\sqrt{-1}\Theta_G=\gamma + [E], 
\end{align}
where $[E]$ is the integration current defined by the effective divisor $E$ and $\sqrt{-1}\Theta_G$ is the curvature current of $(L^{\vee}, G)$ 
defined by $\sqrt{-1}\Theta_G:=\deldel \log |e_{L}|^{2}_{H}$. 
We will prove that the first Chern class of $L$ is  zero and  $\gamma $ is a semi-positive $(1,1)$-form, 
which implies that $E=0$ (in particular $\phi$ is smooth on $X_o$).

The image $Y$ of the non-trivial MRC fibration is not uniruled  by \cite{GHS03}, 
and thus the canonical bundle $K_Y$ of $Y$ is pseudo-effective by \cite{BDPP}.  
Hence, by the definition of $L$, it can be seen that the line bundle $L$ is also pseudo-effective. 
Then, for the K\"ahler form $\omega$ associated to the given metric $g$, 
the intersection number $c_1(\pi^* L) \cdot \{\pi^* \omega\}^{n-1}$ is non-negative. 
Therefore we obtain 
\begin{align}\label{eq-3}
0 \geq 2\pi \int_{\bar X} c_1(\pi^* L^\vee) \wedge \pi^* \omega^{n-1}
=\int_{\bar X} \gamma \wedge \pi^* \omega^{n-1} + \int_E \pi^* \omega^{n-1}
\end{align}
from (\ref{eq-2}) by taking the wedge product with $\pi^* \omega^{n-1}$ 
and the integration on $\bar X$. 
The second term of the right hand side is non-negative. 
Now we consider the first term. 
We can take a (non-empty) Zariski open set $Y_0$ such that 
the restriction $\phi :X_0:=\phi^{-1}(Y_{0}) \to Y_0$ 
is a proper smooth morphism since $\phi$ is an almost holomorphic map. 
By \cite[Claim 3.4]{Mat18b}, 
for an arbitrary point $p \in X_0$, 
there exists an orthonormal basis $\{e_{i}\}_{i=1}^n$ of the tangent space $T_{X, p}$ at $p$
satisfying the followings\,$:$
\begin{itemize}
\item[(1)] $\{e_i \}_{i=1}^{m}$ is an orthonormal basis of the orthogonal complement $(T_{X/Y, p})^{\bot}$. 
\item[(2)] $\Rur{g}{e_i}{e_j} \geq 0$ for any $1 \leq i, j \leq m$.
\item[(3)] $\sqrt{-1}\Theta_{G}(e_i, \bar e_i) \geq 0$   for any $i=1,2,\dots, m$. 
\end{itemize}
We remark that the curvature $\sqrt{-1}\Theta_{G}$ is a smooth $(1,1)$-form on a neighborhood of $p$  
since the metric $G$ is smooth on a neighborhood of a smooth point of $\phi$. 
Here the notation  
$$
R_g:=R_{(T_X,g)} \in 
C^{\infty}(X, \Lambda^{1,1}\otimes T_X^\vee \otimes \bar T_X^{\vee})
$$
denotes  the curvature tensor of $(X, g)$ defined by 
$$
R_g(x, \bar y,z, \bar u):= 
\big \langle \sqrt{-1}\Theta_{g}(x, \bar y)(z),  u\big \rangle_g
$$
for tangent vectors $x, y, z, u \in T_X$. 
Further, by the same argument as in \cite[Step 3]{Mat18b}, 
we can check the following equality\,$:$
\begin{align}\label{eq-4}
\frac{n}{2}\int_{\bar X} \gamma \wedge \pi^* \omega^{n-1}=
\int_{X_0} \sum_{i=1}^{m} \sqrt{-1}\Theta_{G}(e_i, \bar e_i) \, \omega^{n} +
\int_{X_0} \sum_{j=m+1}^{n} \sqrt{-1}\Theta_{G}(e_j, \bar e_j)\, \omega^{n}.   
\end{align}
Note that the integrand of the second term is the scalar curvature in the vertical direction 
(that is, the scalar curvature of a fiber $(F, g|_{F})$ of $\phi$), 
and thus it does not depend on the choice of the orthonormal basis $\{e_{i}\}_{i=1}^{n}$. 
The first term of the right hand side is non-negative by property $(3)$. 
The key point here is that the second term can be shown to be non-negative by Stokes's theorem and Fubini's theorem. 
We will omit the detailed proof  to avoid repeating (see \cite[Step 3]{Mat18b} for the precise proof). 
This implies that the right hand side is non-negative, 
and thus we can conclude that all the terms appearing in (\ref{eq-3}) and (\ref{eq-4}) should be equal to zero.

It follows that $\Theta_{G}(e_i, \bar e_i)=0$ for any $i=1,2, \dots, m$ 
since the first term in (\ref{eq-4}) is zero and its integrand is non-negative by property $(3)$. 
By applying the formula (\ref{eq-1}) to the exact sequence 
$$
0 \xrightarrow{\quad  \quad}   \Ker(d\phi_*) \xrightarrow{\quad  \quad}  \Lambda^m T_X \xrightarrow{\quad d\phi_* \quad} L^\vee = \phi^{*} K_Y^{\vee} \xrightarrow{\quad  \quad}  0
$$
on a neighborhood of $p \in X_0$, 
we can obtain 
\begin{align*}
0 \leq \sum_{k=1}^{m}\Rur{g}{e_i}{e_k}  \leq   \sqrt{-1}\Theta_{G}(e_i, \bar e_i)=0
\end{align*}
for any $i=1,2,\dots, m$. 
Here we used property (2) to obtain the left inequality. 
Each term in the left hand side is non-negative by property (2), 
and thus we can see that
$$ \text{
$H_g([e_i])=\Rur{g}{e_i}{e_i}=0$ for any $i=1,2,\dots, m$. 
}
$$ 
In particular, the tangent vector $e_i$ is the minimizer of 
the semi-positive holomorphic sectional curvature $H_g$. 
Hence it follows that $\Rur{g}{v}{e_i}$ is non-negative for any tangent vector $v \in T_X$
from \cite[Lemma 4.1]{Yan18a} (see also \cite[Lemma 2.1]{Mat18b}, \cite{Bre}, and \cite{BKT13}). 
By applying the formula (\ref{eq-1}) to a tangent vector $v \in T_X$ again, 
we  can see that 
\begin{align*}
0 \leq \sum_{i=1}^{m}\Rur{g}{v}{e_i} \leq \sqrt{-1}\Theta_{G}(v, \bar v). 
\end{align*}
This means that the curvature $\sqrt{-1}\Theta_{G}$ is semi-positive on $X_0$. 
On the other hand, we have $\gamma=\pi^{*}\sqrt{-1}\Theta_{G} $ on a (non-empty) Zariski open set of $\bar X$. 
Hence it follows that $\gamma$ is a semi-positive $(1,1)$-form on $\bar X$ since $\gamma$ is smooth on $\bar X$.

The first Chern class $c_{1}(\pi^* L^\vee)$ is represented by 
the sum of the semi-positive form $\gamma$ and the positive current $[E]$ by (\ref{eq-2}) and the above argument. 
On the other hand, the first Chern class $c_{1}(\pi^* L^\vee)$ is numerically zero 
since we have 
$$ 
\int_X c_1(L^\vee) \cdot \{\omega \}^{n-1}=\int_{\bar X} c_1(\pi^* L^\vee) \cdot \{\pi^* \omega \}^{n-1} = 0
$$ 
and $L$ is pseudo-effective. 
This implies  that $\gamma=0$ and $E=0$ (namely, $\sqrt{-1}\Theta_{G}=0$). 
Therefore we can conclude that 
$G$ is a smooth metric.
In particular, the morphism $\phi $ is a smooth morphism on $X_o$. 

In the rest of the proof, we will show that  
$$
B\in C^{\infty}(X, \Lambda^{0,1}\otimes \Hom(T_{X/Y}^{\bot}, T_{X/Y}))
$$
defined for the below exact sequence (see subsection \ref{subsec2-1}) is identically zero on $X_o$ 
(which leads to the desired decomposition of the tangent bundle $T_{X}$). 
By the above argument, we have already proved that   
$\Rur{g}{v}{e_i}$ is non-negative for any $i=1,2,\dots, m$ and any tangent vector $v \in T_{X}$. 
For the metrics $g_{S}$ and $g_{Q}$ on $T_{X/Y}$ and $\phi^{*} T_Y$ 
induced by  the standard exact sequence 
$$
0  \xrightarrow{\quad \quad}(T_{X/Y}, g_S) \xrightarrow{\quad \quad} (T_X, g)  \xrightarrow{\quad \quad} (\phi^{*} T_Y, g_Q)  \xrightarrow{\quad \quad} 0 \text{ on $X_o$}, 
$$  
we can  obtain 
\begin{align}\label{eq-5}
0 \leq \Rur{g}{v}{e_i} + 
\big \langle B_{\bar v} (e_i), B_{\bar v} (e_i) \big \rangle_{g_S}
= \big \langle \sqrt{-1}\Theta_{g_Q}(v, \bar v)(e_i), e_i \big \rangle_{g_Q}
\end{align}
by applying the formula (\ref{eq-1}) to the above exact sequence. 
On the other hand, the induced metric $\det g_Q$ on $\phi^{*}K_Y^{\vee}=\det \phi^{*} T_Y$ 
coincides with the metric $G$ on $X_o$ by the definition of $G$, 
and further the curvature $\sqrt{-1}\Theta_{G}$ of $(\phi^{*}K_Y^{\vee}, \det g_Q=G)$ is flat by the above argument. 
Therefore we have 
\begin{align*}
&\sum_{i=1}^{m}\big \langle \sqrt{-1}\Theta_{g_Q}(v, \bar v)(e_i), e_i \big \rangle_{g_Q}\\
=&
\big \langle \sqrt{-1}\Theta_{\det g_Q}(v, \bar v)(e_1 \wedge e_{2} \wedge \cdots \wedge e_{m}), e_1 \wedge e_{2} \wedge \cdots \wedge e_{m} \big \rangle_{\det g_Q}\\
=&0. 
\end{align*}
By combining it with the inequality (\ref{eq-5}), 
we can obtain that 
\begin{align*}
\Rur{g}{v}{e_i} =0 
\quad \text{ and } \quad  
\big \langle B_{\bar v} (e_i), B_{\bar v} (e_i) \big \rangle_{g_S} =0
\end{align*}
for  any $i=1,2,\dots, m$ and any tangent vector $v \in T_X$. 
Here we used the fact that 
$\langle B_{\bar v} (\bullet), B_{\bar v} (\bullet)  \rangle_{g_S}$  is a semi-positive  definite quadratic form on $T_{X/Y}^{\bot}$. 
We can see that $B$ is identically zero on $X_{o}$ 
since the trace $\sum_{i=1}^{m}\langle B_{\bar v} (e_i), B_{\bar v} (e_i)  \rangle_{g_S}$ is zero. 
Therefore we obtain the desired decomposition on $X_o$ (see subsection \ref{subsec2-1}). 
\end{proof}

As an application of Theorem \ref{key-thm} and Lemma \ref{fol-lem}, 
we have\,$:$

\begin{theo}[=Theorem \ref{main-thm}]\label{main-thm2}
Let $X$ be a smooth projective variety admitting a K\"ahler metric  with semi-positive holomorphic sectional curvature.

Then we can choose a MRC fibration $X \to Y$ of $X$ to be 
a morphism $($without indeterminacy locus$)$  to a smooth projective variety $Y$. 
\end{theo}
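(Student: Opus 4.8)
The plan is to run the following strategy. Start with an arbitrary non-trivial MRC fibration $\phi \colon X \dashrightarrow Y$ to a smooth projective variety $Y$ (if the MRC fibration is trivial, then $X$ is rationally connected and there is nothing to prove). Let $Z_o$ be the indeterminacy locus of $\phi$ and put $X_o := X \setminus Z_o$. Since $\phi$ is a rational map from a smooth variety and is almost holomorphic, the indeterminacy locus satisfies $\codim(X \setminus X_o) \geq 2$. By Theorem \ref{key-thm}, the map $\phi$ is a smooth morphism on $X_o$ and the standard exact sequence admits a holomorphic orthogonal splitting $T_X = T_{X/Y} \oplus \phi^{*}T_Y$ on $X_o$, so that on $X_o$ we have a holomorphic subbundle $T_{X/Y} \subset T_X$ with holomorphic complement $T_{X/Y}^{\bot}$. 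The goal is to promote this to a globally defined foliation on $X$ and then invoke Lemma \ref{fol-lem}.

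First I would extend the decomposition across the small locus $Z_o$. Writing $\iota \colon X_o \hookrightarrow X$ for the inclusion, I set $\mathcal{V} := \iota_{*}(T_{X/Y})$ and $\mathcal{Q} := \iota_{*}(T_{X/Y}^{\bot})$, which are reflexive coherent sheaves on $X$ because $\codim(X \setminus X_o) \geq 2$. The direct sum decomposition on $X_o$ induces a natural morphism $\mathcal{V} \oplus \mathcal{Q} \to T_X$ of reflexive sheaves (note $T_X = \iota_{*}(T_X|_{X_o})$, as $T_X$ is locally free) which is an isomorphism over $X_o$. Since both sides are reflexive and agree on the big open set $X_o$, Lemma \ref{ref-lem} upgrades this to an isomorphism $T_X = \mathcal{V} \oplus \mathcal{Q}$ on all of $X$. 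Because $T_X$ is locally free and a direct summand of a locally free sheaf is again locally free (being a finitely generated projective, hence free, module over each regular local ring $\mathcal{O}_{X,p}$; alternatively one checks that the rank is constant and applies Lemma \ref{Har-lem}), both $\mathcal{V}$ and $\mathcal{Q}$ are subbundles of $T_X$, and $\mathcal{V}$ is a genuine holomorphic subbundle with locally free quotient defined on the whole of $X$.

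Next I would verify the hypotheses of Lemma \ref{fol-lem} for $W := \mathcal{V}$. On $X_o$ the subbundle $\mathcal{V}$ equals the relative tangent bundle $T_{X/Y}$ of the morphism $\phi_o$, hence it is integrable there; as $\mathcal{V}$ is integrable on the non-empty Zariski open set $X_o$ and $\mathit{Hom}(\Lambda^2 \mathcal{V}, T_X/\mathcal{V})$ is torsion free, the associated section $F_{\mathcal{V}}$ vanishes identically and $\mathcal{V}$ is integrable on all of $X$ (subsection \ref{subsec2-2}). Moreover, a general fiber $F = \phi^{-1}(y)$ over a general point $y$ of the locus where $\phi$ is proper and smooth is a compact, rationally connected submanifold entirely contained in $X_o$, and since $\mathcal{V}|_{X_o} = T_{X/Y}$ it is a leaf of the foliation $\mathcal{V}$. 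Thus $\mathcal{V}$ has at least one compact rationally connected leaf, and Lemma \ref{fol-lem} produces a smooth morphism $\psi \colon X \to Y'$ with $\mathcal{V} = T_{X/Y'}$.

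Finally, $\psi$ is the desired morphism. Its fibers are the leaves of $\mathcal{V}$, which over a dense Zariski open set coincide with the fibers of $\phi$ and are therefore rationally connected; hence $\psi$ has the same general fiber as $\phi$ and is again a MRC fibration, whose base $Y'$ is smooth (it is the image of the smooth morphism $\psi$ from the smooth variety $X$) and projective (the base of an MRC fibration of a projective variety is projective; equivalently $Y'$ is bimeromorphic to the projective $Y$). The main obstacle is the second step: Theorem \ref{key-thm} provides the splitting only on the big open set $X_o$, whereas Lemma \ref{fol-lem} requires an honest subbundle foliation on the entire manifold. The crux is therefore to globalize the splitting by reflexivity and to exploit that a direct summand of the locally free sheaf $T_X$ is again locally free; this is exactly what converts the codimension-$\geq 2$ decomposition into a foliation to which H\"oring's theorem applies.
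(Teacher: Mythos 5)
Your proposal is correct and follows essentially the same route as the paper: extend the splitting from Theorem \ref{key-thm} across the codimension-two indeterminacy locus using reflexive sheaves and Lemma \ref{ref-lem}, deduce local freeness and integrability of the vertical summand, and apply Lemma \ref{fol-lem} to the foliation with a compact rationally connected leaf, finally checking the resulting morphism is again an MRC fibration. The only cosmetic difference is that you justify local freeness by observing that a direct summand of a locally free sheaf is locally free, whereas the paper runs the rank-semicontinuity argument of Lemma \ref{Har-lem} (which you also mention as an alternative).
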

\begin{proof}[Proof of Theorem \ref{main-thm2}.]
We first take an arbitrary MRC fibration $\phi: X \dashrightarrow Y$ to a smooth projective variety $Y$. 
We will find a morphism $\phi': X \to Y'$ (which may be different from the original one $\phi: X \dashrightarrow Y$)
such that $\phi': X \to Y'$ is one of MRC fibrations of $X$, 
by using Theorem \ref{key-thm}, Lemma \ref{fol-lem}, and  a similar way to \cite{CH17}. 
Let $X_o$ be the maximal Zariski open set in $X$ 
such that the restriction $\phi_o$ of $\phi$  
$$
\phi_o:=\phi|_{X_o}: X_o \to Y_o:=\phi(X_o) \subset Y 
$$ 
is a morphism and let $i: X_o \hookrightarrow X$ be the natural inclusion. 
Note that the codimension of the Zariski closed set $X \setminus X_o$ is larger than or equal to two. 
In summary, we have the following commutative diagram\,$:$
\begin{equation*}
\xymatrix@C=40pt@R=30pt{
 & X_o \ar[d]_i \ar[rd]^{\phi_o\ \ }  & \\ 
& X \ar@{-->}[r]^{\phi \ \ \ }  &  Y.\\   
}
\end{equation*}
By Theorem \ref{key-thm}, 
the orthogonal complement $T_{X_o/Y_o}^{\bot}$ is a holomorphic vector bundle and 
there exists the isomorphism $j: \phi^{*} T_{Y_{o}} \cong T_{X_o/Y_o}^{\bot}$  such that 
\begin{align}\label{eq-6}
T_{X_{o}} =  T_{X_o/Y_o} \oplus j (\phi_{o}^{*} T_{Y_{o}}). 
\end{align}
Now we define the coherent sheaves $V$ and $W$ on $X$ 
by 
$$
V:=\big( i_* \mathcal{O}_{X_o}((j(\phi_{o}^*T_{Y_o}))) \big)^{\vee \vee} \text{ and }
W:=\big( i_* \mathcal{O}_{X_o}((T_{X_o/Y_o})) \big)^{\vee \vee}. 
$$
Note that $V$ and $W$ are reflexive sheaves on $X$ since they are defined by the double dual. 
Then it can be seen that $T_X = V \oplus W $ holds on $ X_o$ by the holomorphic decomposition (\ref{eq-6}). 
By lemma \ref{ref-lem} and $\codim (X\setminus X_0) \geq 2$, 
this  decomposition can be extended to the ambient space $X$, 
that is, we obtain the decomposition 
$$
T_X \cong V \oplus W \text{ on } X. 
$$

Now we will show that the subsheaf $W \subset T_X$ is actually a subbundle of $T_X$ 
and it determines an integrable foliation. 
The dimensions 
$$
\dim (W_p \otimes_{\mathcal{O}_{X,p}}\mathcal{O}_{X,p}/\mathfrak{m}_p) 
\text{ \quad and \quad} 
\dim (V_p \otimes_{\mathcal{O}_{X,p}}\mathcal{O}_{X,p}/\mathfrak{m}_p)
$$
are upper semi-continuous with respect to $p \in X$ by Lemma \ref{Har-lem} 
and the sum is equal to $n=\dim X$. 
Hence they should be constant. 
Then it follows that $V$ and $W$ are actually locally free sheaves from Lemma \ref{Har-lem}. 
In order to check the integrality of $W \subset T_X$, 
we consider the section 
$$
F_{W} \in H^{0}(X, {\mathit{Hom}}(\Lambda^2 W, T_X/W ))
$$
obtained from the Lie bracket (see subsection \ref{subsec2-2}). 
The foliation $W \subset T_X$ is integrable on $X_o$ 
since $\phi_o: X_o \to Y_o$ is smooth and $W=T_{X_o/Y_o}$ holds on $X_o$. 
In particular, the section $F_{W}$ is vanishing on $X_o$. 
Then it follows that $F_{W} $ is identically zero on $X$
since the sheaf ${\mathit{Hom}}(\Lambda^2 W, T_X/W )$ is torsion free 
as $\mathcal{O}_X$-modules. 
Hence we can see that the foliation $W \subset T_X$ is integrable. 

A general fiber of $\phi$ is a leaf of the integrable foliation $W \subset T_X$. 
Hence a general leaf is compact since $\phi: X \dashrightarrow Y$ is almost holomorphic 
and it is  rationally connected since $\phi: X \dashrightarrow Y$ is a MRC fibration. 
By applying Lemma \ref{fol-lem} to our situation, 
we can find a smooth morphism 
$\phi' : X \to Y'$  such that $W=T_{X/Y'}$. 

It remains to show that the morphism $\phi': X \to Y'$ is also a MRC fibration. 
A general fiber of $\phi: X \dashrightarrow Y$ is not only a leaf of $W$ 
but also a leaf of $T_{X/Y'}$. 
Therefore we can construct a birational map $\pi: Y \dashrightarrow Y'$ such that 
\begin{equation*}
\xymatrix@C=40pt@R=30pt{
 & X \ar@{-->}[d]_\phi \ar[rd]^{\phi' \ \ }  & \\ 
& Y \ar@{-->}[r]^{\pi \ \ \ }  &  Y'. 
}
\end{equation*}
This means that the morphism $\phi': X \to Y'$ is a MRC fibration. 
\end{proof}

By Lemma \ref{Ehr-lem} and the result of \cite{Mat18b}, 
we have\,$:$

\begin{theo}[=Theorem \ref{mainn-thm}]\label{mainn-thm2}
Let $(X, g)$ be a compact K\"ahler manifold with semi-positive holomorphic sectional curvature, and 
let $\phi: X \to Y$ be a morphism to a compact K\"ahler manifold $Y$ with pseudo-effective canonical bundle. 
\vspace{0.2cm}\\
$(1)$ The following statements hold\,$:$
\begin{itemize}
\item The morphism $\phi: X \to Y$ is smooth and locally trivial. 
\item The exact sequence of vector bundles 
$$
0 \xrightarrow{\quad \quad} T_{X/Y} 
\xrightarrow{\quad \quad} T_X \xrightarrow{\quad d\phi_* \quad} \phi^{*} T_Y 
\xrightarrow{\quad \quad}  0
$$
admits the holomorphic orthogonal splitting, 
that is, the orthogonal complement $T_{X/Y}^{\bot}$ is a holomorphic vector bundle and 
there exists an isomorphism $j: \phi^{*} T_Y \to T_{X/Y}^{\bot}$ 
such that it gives the holomorphic orthogonal decomposition
$$
T_X = T_{X/Y} \oplus j(\phi^{*} T_Y) \cong T_{X/Y} \oplus \phi^{*} T_Y.
$$ 
\item The image $Y$ admits a K\"ahler metric $g_{Y}$ 
such that $g_Q= \phi^{*} g_{Y}$ and the holomorphic sectional curvature of $g_Y$ is identically zero. 
In particular, the image $Y$ has a finite \'etale cover $T \to Y$ by a complex torus $T$. 
Here $g_Q$ is the hermitian metric on $\phi^{*} T_Y$ 
induced by the above exact sequence and the metric $g$. 
\end{itemize}
$(2)$ We obtain the isomorphism 
$$
X_{\rm{univ}} \cong \mathbb{C}^m \times F_{\rm{univ}}. 
$$
$(3)$ There exists a K\"ahler metric $g_{F_{\rm{univ}}}$ on ${F_{\rm{univ}}}$ 
such that the holomorphic sectional curvature of $g_{F_{\rm{univ}}}$ is semi-positive 
and the above isomorphism $X_{\rm{univ}} \cong \mathbb{C}^m \times F_{\rm{univ}}$ 
is isometric with respect to the K\"ahler metrics induced by $g$, $g_Y$,  and $g_{F_{\rm{univ}}}$. 
\end{theo}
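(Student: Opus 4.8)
The plan is to obtain part (1) by the same analytic argument as Theorem \ref{key-thm}, and then to feed the resulting holomorphic orthogonal splitting into Ehresmann's theorem (Lemma \ref{Ehr-lem}) together with the de~Rham decomposition to produce (2) and (3). First I would observe that, since $\phi$ is already a morphism, its indeterminacy locus is empty, so in the notation of Theorem \ref{key-thm} one has $X_o = X$; and since $K_Y$ is pseudo-effective by hypothesis (so that $L := \phi^{*}K_Y$ is a genuinely pseudo-effective line bundle), the entire argument of Theorem \ref{key-thm} — which is the content of \cite[Theorem 1.7]{Mat18b} in the compact K\"ahler setting — applies over $X$. This yields the first two bullets of (1): $\phi$ is smooth, and the exact sequence $0 \to T_{X/Y} \to T_X \to \phi^{*}T_Y \to 0$ admits a holomorphic orthogonal splitting $T_X = T_{X/Y} \oplus j(\phi^{*}T_Y)$. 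In the language of subsection \ref{subsec2-1}, this is exactly the vanishing $A \equiv 0$ (equivalently $B \equiv 0$) of the second fundamental forms, and it also produces the flatness of the induced metric on $\det \phi^{*}T_Y$.

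Next I would pass from this Hermitian splitting to a Riemannian one. Since $g$ is K\"ahler, its Chern connection coincides with the Levi-Civita connection, and $A \equiv 0$ says precisely that the orthogonal distribution $V := j(\phi^{*}T_Y) = T_{X/Y}^{\bot}$ (and likewise $T_{X/Y}$) is parallel with respect to this connection. A parallel distribution is automatically involutive and totally geodesic; in particular $V \subset T_X$ is an integrable foliation with $T_X = V \oplus T_{X/Y}$. I can therefore apply Ehresmann's theorem in the form of Lemma \ref{Ehr-lem}: $\phi$ is locally trivial (completing the first bullet of (1)), and there is a biholomorphism $X_{\rm univ} \cong Y_{\rm univ} \times F_{\rm univ}$ under which $\mu^{*}V = \pr_1^{*}T_{Y_{\rm univ}}$ and $\mu^{*}T_{X/Y} = \pr_2^{*}T_{F_{\rm univ}}$.

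For the metric statements I would invoke the de~Rham decomposition theorem: $\mu^{*}g$ is a complete K\"ahler metric on the simply connected manifold $X_{\rm univ}$ carrying the two complementary parallel distributions above, so the decomposition $X_{\rm univ} \cong Y_{\rm univ} \times F_{\rm univ}$ is isometric for a product metric, and matching the tangent-bundle identifications of Lemma \ref{Ehr-lem} shows the factors are indeed $Y_{\rm univ}$ and $F_{\rm univ}$. The $Y_{\rm univ}$-factor metric is $\pi_1(Y)$-invariant and descends to a K\"ahler metric $g_Y$ with $g_Q = \phi^{*}g_Y$. To see that $g_Y$ has holomorphic sectional curvature identically zero I would argue that $H_{g_Y} \ge 0$ (it is the restriction of $H_g \ge 0$ to the totally geodesic horizontal leaves, by the Gauss equation), while pseudo-effectivity of $K_Y$ forces the total scalar curvature to be non-positive — the integral of the scalar curvature is a positive multiple of $c_1(Y)\cdot\{\omega_Y\}^{m-1}$, which is $\le 0$ because $-c_1(Y)=c_1(K_Y)$ is pseudo-effective. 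A non-negative $H_{g_Y}$ with vanishing total scalar curvature must vanish identically, so $H_{g_Y} \equiv 0$; as holomorphic sectional curvature determines the whole curvature tensor, $g_Y$ is flat, whence $Y_{\rm univ} \cong \mathbb{C}^m$ and $Y$ has a finite \'etale cover by a complex torus. This finishes (1), and with $\dim Y = m$ it gives (2).

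Finally, for (3) the fiber metric $g_{F_{\rm univ}}$ is the second factor of the product metric above, i.e. the restriction of $\mu^{*}g$ to the totally geodesic complex fibers, and the isometry asserted in (3) is exactly the de~Rham splitting; its holomorphic sectional curvature is semi-positive because, the fibers being totally geodesic, the Gauss equation identifies it with the restriction of $H_g \ge 0$ to vertical directions. I expect the main obstacle to lie in the second and third paragraphs: carefully upgrading the holomorphic $g$-orthogonal bundle splitting ($A \equiv 0$) to a genuine \emph{parallel} Riemannian splitting, so that both Ehresmann's theorem and de~Rham apply, and then establishing that the base metric actually descends and is flat — in particular pinning down $H_{g_Y} \equiv 0$ rather than merely the vanishing of $\det g_Q$'s curvature supplied by Theorem \ref{key-thm}.
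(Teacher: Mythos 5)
Your proposal is correct, and its core skeleton coincides with the paper's: since $\phi$ is already a morphism and $K_Y$ is pseudo-effective by hypothesis, the argument of Theorem \ref{key-thm} applies with $X_o=X$ (the paper itself invokes \cite[Theorem 1.4]{Mat18b} for smoothness and Theorem \ref{key-thm} for the splitting); the vanishing $B\equiv 0$ together with torsion-freeness of the Chern connection (which is the Levi-Civita connection, $g$ being K\"ahler) makes the horizontal subbundle $j(\phi^{*}T_Y)=T_{X/Y}^{\bot}$ parallel, hence an integrable foliation; and Lemma \ref{Ehr-lem} then yields local triviality and the biholomorphic splitting of $X_{\rm univ}$. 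Where you genuinely diverge is in the metric statements. For the third bullet of (1), the paper simply quotes \cite[Theorem 1.4]{Mat18b} to obtain $g_Y$ with $g_Q=\phi^{*}g_Y$ and identically vanishing holomorphic sectional curvature, and then gets flatness and the torus cover from \cite[Proposition 2.2]{HLW16}, \cite{Igu54}, \cite{Ber66}; you instead reconstruct $g_Y$ by descending the first factor of a de~Rham product metric and kill its holomorphic sectional curvature by combining the Gauss equation (the horizontal leaves are totally geodesic) with the total-scalar-curvature bound forced by pseudo-effectivity of $K_Y$ --- a correct, more self-contained derivation that essentially re-proves the quoted result. For (3), the paper does not invoke de~Rham at all: it verifies by hand that the induced fiber metric is constant in the base direction, showing $D_{\widetilde v}\widetilde w=0$ for pull-back fields $\widetilde v,\widetilde w$ via $[\widetilde v,\widetilde w]=0$, torsion-freeness, and the fact that $D$ preserves the two parallel subbundles --- in effect an elementary proof of exactly the instance of the de~Rham splitting you need. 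Your route buys brevity and a conceptual explanation, but it carries two obligations that you only partially flag: (i) the de~Rham isometric product structure must be matched with Ehresmann's biholomorphic one, which does work because a product structure is determined by its pair of complementary foliations and the parallel distributions here are $J$-invariant, so the splitting is holomorphic; and (ii) the descent of the factor metric to $Y$ requires invariance under all of $\pi_1(Y)$, which uses surjectivity of $\pi_1(X)\to\pi_1(Y)$, i.e.\ connectedness of the fibers of $\phi$. Neither is a gap, but both should be spelled out in a final write-up.
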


\begin{proof}[Proof of Theorem \ref{mainn-thm2}]
Let $\phi: X \to Y$ be a morphism from a compact K\"ahler manifold $(X, g)$ 
with semi-positive holomorphic sectional curvature 
to a compact K\"ahler manifold $Y$ with pseudo-effective canonical bundle. 
Then, by \cite[Theorem 1.4]{Mat18b}, we can obtain the following properties\,$:$
\begin{itemize}
\item $\phi$ is a smooth morphism. 
\item There exists a K\"ahler metric  $g_Y$ on $T_Y$ such that 
$g_Q=\phi^* g_Y$ and the holomorphic sectional curvature of $(Y, g_Y)$ is identically zero. 
\end{itemize}
Here $g_Q$ is the metric on $\phi^{*} T_Y$ induced  by $g$ and the exact sequence 
\begin{align*}
0 \xrightarrow{\quad \quad } T_{X/Y}\xrightarrow{\quad \quad}  
T_X \xrightarrow{\quad d\phi_* \quad}
\phi^{*} T_Y
\xrightarrow{\quad \quad } 0. 
\end{align*}
The second property implies that  
the metric $g_{Y}$ is flat and the manifold $Y$ has a finite \'etale cover $T \to Y$ by a complex torus $T$ 
 (see \cite[Proposition 2.2]{HLW16}, \cite{Igu54}, and \cite{Ber66}). 
Moreover, by Theorem \ref{key-thm}, the above exact sequence admits the holomorphic orthogonal splitting, 
that is, there exists an isomorphism $j: \phi^{*} T_Y \cong  T_{X/Y}^{\bot} \subset T_X$ as holomorphic vector bundles  
such that it gives the orthogonal decomposition 
$$
T_X =T_{X/Y}  \oplus T_{X/Y}^{\bot} = T_{X/Y}  \oplus j (\phi^{*} T_Y). 
$$

By the above argument, we can see that 
it is enough for the statement (1) in Theorem \ref{mainn-thm2} 
to check that $\phi$ is locally trivial. 
For this purpose, we will show that the holomorphic subbundle 
$j (\phi^{*} T_Y) = T_{X/Y}^\bot \subset T_X$ determines an integrable foliation.  
Since $g$ is a K\"ahler metric,  
the Chern connection $D$ of $(T_X, g)$  corresponds to 
the Levi-Civita connection of the induced Riemannian metric. 
Hence, from the torsion freeness, we obtain that 
\begin{align}\label{eq-7}
[u_1, u_2]=D_{u_1}u_2 - D_{u_2}u_1
\end{align}
for any (local) vector fields $u_1, u_2$ on $X$. 
On the other hand, 
for a vector field $u$ in the horizontal direction (that is, it is a section of $j (\phi^{*} T_Y)=T_{{X/Y}}^\bot$), 
we have 
$$
D(u)=B (u) + D_Q(u)=D_Q(u). 
$$
Here  we used the fact that $B$ is identically zero, 
which can be obtained from the holomorphic orthogonal splitting of the above exact sequence. 
In particular, we can see that 
$D_{u_1}u_2$ and $ D_{u_2}u_1$ are also vector fields in the horizontal direction 
for any vector fields $u_{1}$ and $u_{2}$ in the horizontal direction. 
By the formula (\ref{eq-7}), 
we can conclude that the Lie bracket $[u_1, u_2]$ is in the horizontal direction. 
This means that  the horizontal tangent bundle $j (\phi^{*} T_Y)$ is integrable.

Therefore, by the Ehresmann theorem (see Lemma \ref{Ehr-lem}), we can see that $\phi$ is locally trivial. 
This verifies the statement (1). 
Further there exists a representation $\rho: \pi_{1}(Y) \to \Aut (F)$ such that 
$X \cong Y_{\rm{univ}} \times F/ \pi_{1}(Y)$. 
Then we have the following commutative diagram\,$:$
\begin{equation*}
\xymatrix@C=40pt@R=30pt{
F_{\rm{univ}} 
& X_{\rm univ}:=Y_{\rm{univ}} \times F_{\rm univ}
\ar[l]_{\pr_2\ \ \ \ \ \ \ \ } 
\ar[r]^{}  
\ar[dr]^{\pr_1} \ar@/^30pt/[rr]^{\mu }
& Y_{\rm{univ}} \times F
\ar[r]^{\ \ \ \ \ \ \ \ \ } \ar[d]_{} 
&Y_{\rm{univ}} \times F \big/ \pi_{1}(Y)\ar[d]^{\phi} \cong X\\ 
&
& Y_{\rm{univ}} \cong \mathbb{C}^{m}
\ar[r]^{ \pi }
& Y.\\   
}
\end{equation*}
Moreover the Ehresmann theorem asserts that the \'etale cover 
$$
\mu: Y_{\rm{univ}} \times F_{\rm univ} \to Y_{\rm{univ}} \times F \to  Y_{\rm{univ}} \times F/ \pi_{1}(Y) \cong X
$$
is the universal cover of $X$ 
 and we have 
\begin{align}\label{eq-8}
\pr_1^* T_{Y_{\rm univ}} =\mu^{*} j(\phi^{*}T_Y) \text{ \quad and \quad }
\pr_2^* T_{F_{\rm univ}} =\mu^{*} (T_{X/Y}). 
\end{align}
This verifies the statement (2).

It remains to show the  statement (3) in Theorem \ref{mainn-thm2}. 
Now we consider the induced metric $\mu^* g$ on $\mu^{*}T_X=T_{X_{\rm univ}}$ and 
the holomorphic decomposition 
$$
T_{X_{\rm univ}}=\pr_1^* T_{Y_{\rm univ}}\oplus \pr_2^* T_{F_{\rm univ}}=\mu^{*} j(\phi^{*}T_Y) \oplus \mu^{*} (T_{X/Y}). 
$$ 
The relative tangent bundle $T_{X/Y}$ is orthogonal to $j(\phi^{*}T_Y)=T_{X/Y}^{\bot}$ 
with respect to the metric $g$, 
and thus the above decomposition is also an orthogonal decomposition with respect to $\mu^* g$. 
Therefore it is sufficient to prove that $g_1$ and $g_2$ 
are respectively obtained from the pull-back of some metric of $T_{Y_{\rm univ}}$ and $ T_{F_{\rm univ}}$. 
Here $g_1$ (resp. $g_2$) is the metric on $\pr_1^* T_{Y_{\rm univ}}$ (resp. $\pr_2^* T_{F_{\rm univ}}$) induced by 
the metric $\mu^{*}g$ and the above decomposition.

We can easily prove that $g_1$ coincides with the pull-back of $\pi^{*} g_Y$ by  $\pr_1$. 
Indeed, we have $g_1=\mu^{*}g_Q$ by $\pr_1^* T_{Y_{\rm univ}} =\mu^{*} j(\phi^{*}T_Y) $ and 
$g_Q=\phi^{*} g_Y$ by the property of $g_Q$. 
Hence, by the above commutative diagram, we can easily check that 
$$
g_1=\mu^{*}(\phi^{*} g_Y)=\pr_1^*(\pi^* g_Y). 
$$

In the rest of the proof, we will show that  the metric $g_2$ on $\pr_2^* T_{F_{\rm univ}}$ can be obtained 
from the pull-back of some metric on $F_{\rm univ}$, in other words, 
the metric on the fiber $\pr_1^{-1}(y)$($\cong F_{\rm univ}$) defined by
the restriction of $\mu^{*}g$ and $\pr_2^* T_{F_{\rm univ}} |_{\pr_1^{-1}(y)} \cong  T_{F_{\rm univ}}$ 
is independent of $y \in Y_{\rm univ}$. 
For a local holomorphic vector field $w$ on $F_{\rm univ}$, 
we consider the section $\widetilde w:=\pr_2 ^* w$ of $\pr_2^* T_{F_{\rm univ}}$. 
Note that  the section $\widetilde w$ can be seen as a vector field in the vertical direction 
by the inclusion $\pr_2^* T_{F_{\rm univ}} \subset T_{X_{\rm univ}}$. 
If  the norm $|\widetilde w|_{\mu^* g}=|\widetilde w|_{g_2}$ is constant 
on a fiber $\pr_2^{-1}(p)$($\cong Y_{\rm{univ}}$) of the second projection $\pr_2$, 
we can easily see that the metric $g_{F_{\rm univ}}$ on $F_{\rm univ}$ defined by 
$|w|_{g_{F_{\rm univ}}}=|\widetilde w|_{\mu^* g}$ satisfies the desired property (that is, $g_2=\pr_2^* g_{F_{\rm univ}}$). 
In order to check that $|\widetilde w|_{\mu^* g}$ is constant on a fiber $\pr_2^{-1}(p)$, 
we will show that the differential of $|\widetilde w|_{\mu^* g}$ by the vector field $\widetilde v$ is identically zero 
for any (local) vector $v$ on $Y_{\rm univ}$. 
Here $\widetilde v$ is the vector field on $X_{\rm univ}$ defined by 
the section $\widetilde v:=\phi^* v$ of $\pr_1^* T_{Y_{\rm univ}}$ 
and the inclusion $\pr_1^* T_{Y_{\rm univ}} \subset T_{X_{\rm univ}}$. 
By the formula $\partial |\widetilde w|^2_{\mu^* g} =\langle  D \widetilde w,  \widetilde w\rangle_{\mu^* g}$, 
we obtain 
\begin{align*}
\langle  \partial |\widetilde w|^2_{\mu^* g},  \widetilde v\rangle_{\rm pair} = 
\langle  D_{\widetilde v} \widetilde w,  \widetilde w\rangle_{\mu^* g}. 
\end{align*}
On the other hand, we have 
$$
[\widetilde v, \widetilde w]=D_{\widetilde v}\widetilde w - D_{\widetilde w}\widetilde v
$$
by (\ref{eq-7}). 
The vector fields  $\widetilde v$ and $\widetilde w$ can be locally written as 
$$
\widetilde v=\sum_{i=1}^{m} a_i (z) \frac{\partial}{\partial z_i} \text{\quad and \quad} 
\widetilde w=\sum_{j=m+1}^{n} b_i(w) \frac{\partial}{\partial w_j}
$$
in terms of a local coordinate $z=(z_1,\dots, z_m)$ of $Y_{\rm univ}$ and 
a local coordinate $w=(w_{m+1},\dots, w_n)$ of $F_{\rm univ}$, 
since $\widetilde v$ (resp. $\widetilde w$) is constructed from the pull-back by $\pr_1$ (resp. $\pr_2$). 
From the above local expression and straightforward computations, 
we can easily check that $[\widetilde v, \widetilde w]=0$. 
Further $D\widetilde v$  and $D \widetilde w$ respectively preserve the horizontal and the  vertical  direction 
since the natural splitting $T_{X_{\rm univ}}=\pr_1^* T_{Y_{\rm univ}}\oplus \pr_2^* T_{F_{\rm univ}}$ 
is an orthogonal decomposition (see the argument in the second paragraph of this proof). 
Hence we can conclude that $D_{\widetilde v}\widetilde w=0$ and  $D_{\widetilde w}\widetilde v=0$. 
The differential of the norm $|\widetilde w|^2_{\mu^* g}$ in the horizontal direction is identically zero, 
and thus it is constant on a fiber of $\pr _2$. 
This finishes the proof. 
\end{proof}

We finally prove the main result of this paper, 
as a direct application of Theorem \ref{main-thm} and Theorem \ref{mainn-thm}.

\begin{theo}[=Theorem \ref{m-thm}]\label{m-thm2}
Let $X$ be a smooth projective variety 
equipped with a K\"ahler metric $g$ with semi-positive holomorphic sectional curvature. 
Then we have the following statements\,$:$
\vspace{0.2cm}\\
$(1)$ 
There exists a smooth morphism $\phi: X \to Y$ 
to a smooth projective variety $Y$ with the following properties\,$:$
\begin{itemize}
\item The morphism $\phi: X \to Y$ is a locally trivial morphism 
$($that is, all the fibers are isomorphic$)$. 
\item The image $Y$ is a smooth projective variety with a flat metric. 
In particular, there exists a finite \'etale cover $A \to Y$ by an abelian variety $A$. 
\item The fiber $F$ is a rationally connected manifold. 
In particular, the morphism $\phi: X \to Y$ is a MRC $($maximal rationally connected$)$ fibration of $X$. 
\end{itemize}
Therefore the fiber product $X^*:=A \times_Y X$ admits 
the locally trivial Albanese map $X^* \to A$ to the abelian variety $A$ 
with the rationally connected fiber $F$. 
\vspace{0.2cm}\\
$(2)$ 
Moreover we obtain the isomorphism 
$$
X_{\rm{univ}} \cong \mathbb{C}^m \times F, 
$$
where $X_{\rm{univ}}$ is the universal cover of $X$ and 
$F$ is the rationally connected fiber of $\phi$. 
We have the following commutative diagram\,$:$
\begin{equation*}
\xymatrix@C=40pt@R=30pt{
 X_{\rm univ} \cong \mathbb{C}^{m} \times F\ar[r]^{} \ar[d]^{} & X^*:=A \times_Y X\ar[d] \ar[r]^{} & X\ar[d]^{\phi} \\ 
\mathbb{C}^{m} \ar[r]  & A \ar[r]^{}  &  Y.\\   
}
\vspace{0.1cm}
\end{equation*}
In particular, the fundamental group of $X$ is an extension of a finite group by $\mathbb{Z}^{2m}$. 
\vspace{0.2cm}\\
$(3)$ There exist a K\"ahler metric $g_F$ on the fiber $F$ and 
a K\"ahler metric $g_Y$ on $Y$ with the following properties\,$:$

\begin{itemize}
\item The holomorphic sectional curvature of $g_F$ is semi-positive.  
\item The K\"ahler metric $g_Y$ is flat. 
\item The above isomorphism $X_{\rm{univ}} \cong \mathbb{C}^m \times F$ is not only biholomorphic but also isometric 
with respect to the K\"ahler metrics $\mu^*g $, $\pi^*g_{Y}$, and $g_F$. 
\end{itemize}
Here $\pi$ and $\mu$ respectively denote the universal cover $\pi: \mathbb{C}^m \to Y$ of $Y$ 
and the universal cover $\mu: X_{\rm univ} \to X$ of $X$. 
\end{theo}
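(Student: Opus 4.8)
The plan is to derive all three statements by feeding the fibration produced by Theorem \ref{main-thm} into Theorem \ref{mainn-thm}, so that the substantive analytic content is already in hand. First I would apply Theorem \ref{main-thm} to replace the given rational MRC fibration by an honest morphism $\phi \colon X \to Y$ to a smooth projective variety $Y$ that is still a MRC fibration, now defined on all of $X$. Since the image of a MRC fibration is not uniruled by \cite{GHS03}, the canonical bundle $K_Y$ is pseudo-effective by \cite{BDPP}; this is exactly the hypothesis under which Theorem \ref{mainn-thm} applies to $\phi \colon X \to Y$.

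Part $(1)$ is then almost immediate. Theorem \ref{mainn-thm}$(1)$ gives that $\phi$ is smooth and locally trivial, that the relative tangent sequence splits holomorphically and orthogonally, and that $Y$ carries a flat K\"ahler metric $g_Y$ with identically vanishing holomorphic sectional curvature. As $Y$ is projective, the complex-torus cover supplied by Theorem \ref{mainn-thm}$(1)$ is projective, hence an abelian variety $A \to Y$. The fiber $F$ is rationally connected because $\phi$ was chosen to be a MRC fibration. For the Albanese assertion I would pass to $X^* := A \times_Y X$: base-changing the locally trivial morphism $\phi$ along the finite \'etale cover $A \to Y$ yields a locally trivial morphism $X^* \to A$ with the same rationally connected fiber $F$. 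Since $H^0(F, \Omega^1_F) = 0$ for a rationally connected $F$ and the fibers of $X^* \to A$ are connected, one gets $H^0(X^*, \Omega^1_{X^*}) \cong H^0(A, \Omega^1_A)$, so $\Alb(X^*) = A$ and $X^* \to A$ is the Albanese map.

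For $(2)$ I would use that a rationally connected manifold is simply connected, so $F_{\rm univ} = F$; substituting this into the isomorphism of Theorem \ref{mainn-thm}$(2)$ gives $X_{\rm univ} \cong \mathbb{C}^m \times F$, with $\mathbb{C}^m$ the universal cover of the flat $m$-dimensional $Y$. The commutative diagram records the factorization of $\mu$ through $X^*$, and the statement on $\pi_1(X)$ follows from the homotopy exact sequence of the fiber bundle $F \to X \to Y$: simple connectedness of $F$ forces $\pi_1(X) \cong \pi_1(Y)$, while Bieberbach's theorem applied to the compact flat manifold $Y$ exhibits $\pi_1(Y)$ as an extension of a finite group by the translation lattice $\mathbb{Z}^{2m}$. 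Finally $(3)$ is read off from Theorem \ref{mainn-thm}$(3)$: setting $g_F := g_{F_{\rm univ}}$ gives a metric of semi-positive holomorphic sectional curvature on $F$, the metric $g_Y$ is the flat metric from $(1)$, and the decomposition is isometric for $\mu^* g$, $\pi^* g_Y$, and $g_F$.

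Because every hard estimate is confined to Theorems \ref{key-thm}, \ref{main-thm}, and \ref{mainn-thm}, the remaining work is organizational. I expect the one point requiring genuine care to be the Albanese claim for $X^*$, namely verifying that $X^* \to A$ is the actual Albanese map rather than merely a morphism to an abelian variety; this hinges on the vanishing $H^0(F, \Omega^1_F) = 0$ coming from rational connectedness together with the product-type structure of $X^*$ over $A$.
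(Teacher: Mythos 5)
Your proposal is correct and follows essentially the same route as the paper: feed the morphism produced by Theorem \ref{main-thm} into Theorem \ref{mainn-thm}, using \cite{GHS03} and \cite{BDPP} to get pseudo-effectivity of $K_Y$ and the simple connectedness of rationally connected manifolds to identify $F_{\rm univ}=F$ in parts $(2)$ and $(3)$. The only organizational difference is that the paper first disposes of the degenerate cases separately (holomorphic sectional curvature identically zero, handled by Igusa's theorem, and $X$ rationally connected, where nothing needs proving) before invoking the two theorems for a non-trivial MRC fibration, whereas you apply them uniformly; your extra detail on the Albanese property of $X^* \to A$ and on $\pi_1(X)$ via Bieberbach's theorem fills in points the paper leaves implicit.
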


\begin{proof}[Proof of Theorem \ref{m-thm2}]
When the holomorphic sectional curvature is identically zero, 
the variety $X$ itself has a finite \'etale cover $A \to X$ by an abelian variety $A$ 
by \cite{Igu54} (see also \cite[Proposition 2.2]{HLW16} and \cite{Ber66}). 
In this case, there is nothing to prove since the identity map of $X$ satisfies the desired properties. 

When the holomorphic sectional curvature  is semi-positive but not identically zero, 
the canonical bundle $K_X$ is not pseudo-effective. 
Indeed, we have the equality 
$$
\int_{X} c_1(K_X) \wedge \omega^{n-1} = - \frac{1}{n\pi}\int_{X} S \, \omega^n, 
$$
where $\omega$ is the K\"ahler form associated to $g$ and $S$ is the scalar curvature of $g$. 
In our case, the scalar curvature $S$ is positive on a neighborhood of at least one point 
since $S$ can be described as the integral of the holomorphic sectional curvature. 
Therefore the right hand side is negative. 
This implies  that $X$ is uniruled by \cite{BDPP}. 
There is nothing to prove  in the case of $X$ being rationally connected. 

Therefore we may assume that $X$ admits a non-trivial MRC fibration. 
In this case, by Theorem \ref{main-thm}, 
we can take a MRC fibration $\phi: X \to Y$ of $X$ to be a morphism to a smooth projective variety $Y$. 
Further it can be seen that the canonical bundle $K_Y$ of $Y$ is pseudo-effective  by \cite{GHS03} and \cite{BDPP}.  
Hence the assumptions in Theorem \ref{mainn-thm} are satisfied. 
Then we obtain all the conclusions 
by applying Theorem \ref{mainn-thm} to the MRC fibration $\phi: X \to Y$ 
and by using  the fact that rationally connected manifolds are simply connected. 
\end{proof}

\begin{rem}\label{main-rem}
In the case of $X$ being a compact K\"ahler surface, 
the assumption of $X$ being projective can be removed 
by the classifications of surfaces (see the argument in \cite[Corollary 1.6]{Mat18a}). 
\end{rem}



\end{document}